\DeclareSymbolFont{tipa}{T3}{cmr}{m}{n}
\DeclareMathAccent{\invbreve}{\mathalpha}{tipa}{16}
\newtheorem{theorem}{Theorem}[section]
\newtheorem{lemma}[theorem]{Lemma}
\newtheorem{remark}[theorem]{Remark}
\newcommand{\diam}{\operatorname{diam}}
\newcommand\suchthat{
 \@ifstar
  {\mathrel{}\middle|\mathrel{}}
  {\mid}
}
\newcommand{\vol}{\operatorname{vol}}
\newcommand{\trace}{\operatorname{tr}}
\newcommand{\onehalf}{\frac{1}{2}}
\newcommand{\parametric}{\hat}
\newcommand{\physical}{\invbreve}
\newcommand{\bbN}{{\mathbb N}}
\newcommand{\bbR}{{\mathbb R}}
\newcommand{\bfg}{{\mathbf g}}
\newcommand{\calA}{{\mathcal A}}
\newcommand{\calB}{{\mathcal B}}
\newcommand{\calF}{{\mathcal F}}
\newcommand{\calP}{{\mathcal P}}
\newcommand{\calT}{{\mathcal T}}
\newcommand{\frakL}{{\mathfrak L}}
\begin{document}

\title
[Transformed FEM]
{Geometric Transformation\\of Finite Element Methods:\\Theory and Applications}

\author{Michael Holst}

\address{UCSD Department of Mathematics, 9500 Gilman Drive MC0112, La Jolla, CA 92093-0112}

\email{mholst@ucsd.edu}

\thanks{MH was supported in part by NSF DMS/FRG Award 1262982 and NSF DMS/CM Award 1620366}

\author{Martin Licht}

\address{UCSD Department of Mathematics, 9500 Gilman Drive MC0112, La Jolla, CA 92093-0112}

\email{mlicht@ucsd.edu}

\thanks{ML was supported in part by NSF DMS/RTG Award 1345013 and DMS/CM Award 1262982}

\subjclass[2000]{}

\keywords{A priori error estimates, finite element method, piecewise Bramble-Hilbert lemma}

\begin{abstract}
 We present a new technique to apply finite element methods 
 to partial differential equations over curved domains. 
 A change of variables along a coordinate transformation 
 satisfying only low regularity assumptions 
 can translate a Poisson problem over a curved physical domain
 to a Poisson problem over a polyhedral parametric domain. 
 This greatly simplifies both the geometric setting and the practical 
 implementation, at the cost of having globally rough non-trivial 
 coefficients and data in the parametric Poisson problem. 
 Our main result is that a recently developed broken Bramble-Hilbert lemma 
 is key in harnessing regularity in the physical problem 
 to prove higher-order finite element convergence rates for the 
 parametric problem. 
 Numerical experiments are given which confirm the predictions of our theory.
\end{abstract}

\maketitle

\section{Introduction}
\label{sec:introduction}

The computational theory of partial differential equations 
has been in a paradoxical situation from its very inception:
partial differential equations over domains with curved boundaries 
are of theoretical and practical interest,
but numerical methods are generally conceived only for polyhedral domains. 
Overcoming this geometric gap continues to inspire much ongoing research
in computational mathematics for treating partial differential equations with
geometric features.
Computational methods for partial differential equations over curved domains
commonly adhere to the philosophy of approximating the \emph{physical domain}
of the partial differential equation by a \emph{parametric domain}. 
We mention isoparametric finite element methods \cite{bernardi1989optimal},
surface finite element methods \cite{dziuk2007finite,demlow2009higher,bonito2013afem},
or isogeometric analysis \cite{hughes2005isogeometric} as examples, 
which describe the parametric domain by a polyhedral mesh 
whose cells are piecewise distorted to approximate the physical domain closely or exactly.
\\

In this contribution we approach the topic from a different point-of-view.
Our technique assumes to explicitly know a transformation of the physical domain, 
on which the original partial differential equation is stated, 
onto a polyhedral parametric domain. For example, the unit ball is a domain with curved 
boundary that is homeomorphic to the unit cube. 
Under mild regularity assumptions on that transformation,
the partial differential equation on the curved physical domain 
can be transformed to an equivalent partial differential equation  
over the polyhedral parametric domain. 
It remains then to numerically solve a partial differential equation over the parametric domain.
We believe that the simplification of the geometry is of practical appeal; 
the trade-off is the coordinate transformation contributing low regularity 
terms to the parametric coefficients and parametric right-hand side. 
In a finite element method, 
the effect of using merely approximate problem data 
can be controlled easily by Strang's lemma. 
We emphasize that the transformation from the physical problem
to the parametric problem takes places prior to any numerical analysis. 

In this article, we give a thorough exposition of this technique
and present exemplary numerical computations. 
A mathematical challenge is the regularity of the geometric transformation:
in practice, the transformation is a diffeomorphism \emph{locally} on each cell 
but of rather low regularity \emph{globally} on the entire domain, 
typically no more than bi-Lipschitz.
Thus it is not immediately evident 
how to leverage any higher regularity 
of the original physical problem for quasi-optimal error estimates 
in the finite element method for the practical parametric problem. 
Our main finding is how to overcome that obstacle via a broken Bramble-Hilbert lemma 
that has risen to prominence only recently \cite{veeser2016approximating,camacho2014L2};
we believe the recent nature of the result
to be the reason why our ostensibly simple technique,
at its core only involving a change of variables, 
has not been received earlier by theoretical numerical analysts. 
Thus a second purpose of our article is to advertise 
the broken Bramble-Hilbert lemma to a broader audience.
(In a separate manuscript~\cite{GHL18a}, we develop 
a generalization of the broken Bramble-Hilbert lemma suitable for
use with the finite element exterior calculus; an application of this result
appears in~\cite{GaHo18a}.)

As evidence that the approach has substantial potential for applications,
we note that it has been fairly easy to implement our ideas in the finite 
element software library \texttt{FEniCS} \cite{AlnaesBlechta2015a},
so we believe that practitioners can easily adopt this article's technique. 
Our numerical experiments confirm the theoretically predicted convergence rates.

\smallskip

We now finish this brief introduction by outlining the larger context 
and pointing out some further possibilities of our research. 
This work is a stepping stone towards
developing \emph{intrinsic} finite element methods 
for partial differential equations over manifolds, where it may be
inconvenient or infeasible to describe the manifold extrinsically using
a larger embedding manifold, so that one must work with an intrinsic
description.
If the manifold is computationally represented by a collection
of coordinate charts onto parametric domains, 
then this article lays the foundation 
for the a priori error analysis of a finite element method.
This research agenda will also touch upon 
finite element methods over embedded surfaces \cite{camacho2014L2}.

We are not aware of a prior discussion of this transformation technique in 
the literature of theoretical numerical analysis,
but concrete applications are well-established in computational physics. 
An example is what is known as \textit{cubed sphere} 
in atmospheric and seismic modeling \cite{ronchi1996cubed,ranvcic1996global}. 
We hope that our work helps to connect those developments 
in practical computational physics with the numerical analysis of finite element methods.

For our numerical experiments we have calculated the parametric coefficients manually,
which is feasible in applications such as atmospheric modeling with a fixed geometry of interest. 
However, these calculations can be automated when the transformations are restricted to more specific classes, 
which seems to be more conforming to the demands on numerical methods in engineering. 
For example, our contribution complements the \emph{parametric finite elements} \cite{Zulian2017}
that have recently been formalized in computational engineering, 
albeit without a formal error analysis. 
Moreover, 
our results enable a priori error estimates for \emph{NURBS-enhanced finite element methods} \cite{sevilla2008nurbs},
where the physical geometry is represented over a reference geometry 
in terms of non-uniform rational B-splines (NURBS, \cite{hughes2005isogeometric}). 
Another area of application that we envision are rigorous error estimates 
for simplified computational models in physical modeling over unstructured meshes. 
\\

The remainder of this work is structured as follows.
In Section~\ref{sec:theory}
we introduce our model problem and review the relevant aspects of Galerkin theory.
In Section~\ref{sec:fem} we prove the broken Bramble-Hilbert lemma 
and in Section~\ref{sec:apriori} we elaborate on the a priori error analysis.
Finally, we discuss numerical results in Section~\ref{sec:examples}.

\section{Model Problem and Abstract Galerkin Theory}
\label{sec:theory}

As a model problem throughout this article we consider a variant of the Poisson equation
with a diffusion tensor of low regularity. We then outline an abstract Galerkin theory 
that includes variational crimes. 

\subsection{Function Spaces}
Let $\Omega \subseteq \bbR^{n}$ be a domain.
For $p \in [1,\infty]$ we let $L^{p}(\Omega)$
be the Banach space of $p$-integrable functions over $\Omega$.
Moreover, for $p \in [1,\infty]$ and $s \in \bbR_{0}^{+}$
we let $W^{s,p}(\Omega)$ denote the Sobolev-Slobodeckij space over $\Omega$
with regularity index $s$ and integrability index $p$.
We write $\|\cdot\|_{W^{s,p}(\Omega)}$ for the norm of $W^{s,p}(\Omega)$,
and we write $|\cdot|_{W^{s,p}(\Omega)}$ for the associated seminorm. 
In the case $p=2$ the space $W^{s,p}(\Omega)$ 
carries a Hilbert space structure,
and we denote by $\langle \cdot,\cdot\rangle_{W^{s,2}(\Omega)}$
the Sobolev-Slobodeckij semiscalar product of order $s$. 

Whenever $\Gamma \subseteq \partial\Omega$ is a closed subset of the domain boundary,
we define the space $W^{s,p}(\Omega,\Gamma)$ 
as the closure of the smooth functions in $W^{s,p}(\Omega)$ 
that vanish in an open neighborhood of $\Gamma$. 
For every boundary part $\Gamma \subseteq \partial\Omega$ 
we let $\Gamma^{c}$ denote the \emph{complementary boundary part},
which we define as the closure of $\partial\Omega \setminus \Gamma$. 
Then we write 
$W^{-s,p}(\Omega,\Gamma^{c}) := W^{s,p}(\Omega,\Gamma)^{\ast}$ 
for the dual space of $W^{s,p}(\Omega,\Gamma)$.
This is a Banach space in its own right.

\subsection{Physical Model Problem}
We now introduce the physical model problem. 
Let $\physical\Omega \subseteq \bbR^{n}$ be a Lipschitz domain 
and $\physical\Gamma_{\rm D} \subseteq \partial\physical\Omega$ be closed and non-empty.
We let $\physical\Gamma_{\rm N} \subseteq \partial\physical\Omega$ be the complementary boundary part.
For simplicity we write 
\begin{gather*}
 W^{ s,p}_{\rm D}(\physical\Omega) := W^{s,p}(\physical\Omega,\physical\Gamma_{\rm D}), 
 \quad 
 W^{-s,p}_{\rm N}(\physical\Omega) := W^{s,p}(\physical\Omega,\physical\Gamma_{\rm D})^{\ast}.
\end{gather*}
We assume that $\physical A \in L^{\infty}(\physical\Omega)^{n\times n}$ 
is an essentially symmetric matrix field over $\physical\Omega$ 
that is invertible almost everywhere with 
$\physical A^{-1} \in L^{\infty}(\physical\Omega)^{n\times n}$.  
We write $\|\cdot\|_{L^{2}(\physical\Omega,\physical A)}$ 
for the associated weighted $L^{2}$ norm on the Hilbert space $L^{2}(\physical\Omega)^{n}$,
which is equivalent to the usual $L^{2}$ norm. 

We introduce the symmetric bilinear form of the Poisson problem:  
\begin{gather}
 \physical B : W^{1,2}_{\rm D}(\physical\Omega) \times W^{1,2}_{\rm D}(\physical\Omega) \rightarrow \bbR,
 \quad 
 (\physical u,\physical v) 
 \mapsto 
 \int_{\physical \Omega} \nabla \physical u \cdot \physical A \nabla \physical v \dif \physical x.
\end{gather}
Given a functional $\physical F \in W^{-1,2}_{\rm N}(\physical\Omega)$, 
the model problem is to find $\physical u \in W^{1,2}_{\rm D}(\physical\Omega)$ with 
\begin{gather}
 \label{math:modelproblem}
 \physical B( \physical u , \physical v ) = \physical F( \physical v ),
 \quad 
 \physical v \in W^{1,2}_{\rm D}(\physical\Omega). 
\end{gather}
We recall that 
there exists ${\physical c_{P}} > 0$, 
depending only on $\physical\Omega$, $\physical \Gamma_{\rm D}$, and $\physical A$, such that 
\begin{gather*}
 {\physical c_{P}} 
 \| \physical v \|_{W^{1,2}(\physical\Omega)}^{2} 
 \leq 
 \physical B( \physical v, \physical v ), 
 \quad 
 \physical v \in W^{1,2}_{\rm D}(\physical\Omega).
\end{gather*}
The Lax-Milgram lemma \cite{braess2007finite} thus implies that 
the model problem \eqref{math:modelproblem} has a unique solution 
$\physical u \in W^{1,2}_{\rm D}(\physical\Omega)$
satisfying the stability estimate 
\begin{gather}
 \label{math:discretestability:physical}
 {\physical c_{P}}
 \| \physical u \|_{W^{1,2}_{\rm D}(\physical\Omega)} 
 \leq
 \| \physical F \|_{W^{-1,2}_{\rm N}(\physical\Omega)}
 .
\end{gather}

\subsection{Domain Transformation}
We henceforth call the domain $\physical\Omega \subseteq \bbR^{n}$
the \emph{physical domain}. 
Additionally we now assume to be given another domain $\parametric\Omega \subseteq \bbR^{n}$,
henceforth called \emph{parametric domain}, 
and a homeomorphism 
\begin{gather}
 \label{math:geometrictransformation}
 \Phi : \parametric\Omega \rightarrow \physical\Omega
\end{gather}
from the parametric domain onto the physical domain. 
As a minimal regularity assumption on this homeomorphism 
we assume that 
\begin{gather}
 \label{math:regularity_of_transformation}
 \Phi_{i} \in W^{1,\infty}(\parametric\Omega)
 ,
 \quad 
 \Phi^{-1}_{i} \in W^{1,\infty}(\physical\Omega)
\end{gather}
for each coordinate index $1 \leq i \leq n$.  
This regularity assumption is satisfied, for example,
in the special case that $\Phi$ is bi-Lipschitz.
We write 
\begin{gather*}
 \parametric\Gamma_{\rm D} = \Phi^{-1} ( \physical\Gamma_{\rm D} ),
 \quad 
 \parametric\Gamma_{\rm N} = \Phi^{-1} ( \physical\Gamma_{\rm N} ) 
\end{gather*}
for the corresponding boundary patches along the parametric domain.
On the parametric domain, too, we introduce the short-hand notation 
\begin{gather*}
 W^{ s,p}_{\rm D}(\parametric\Omega) := W^{s,p}(\parametric\Omega,\parametric\Gamma_{\rm D}), 
 \quad 
 W^{-s,p}_{\rm N}(\parametric\Omega) := W^{s,p}(\parametric\Omega,\parametric\Gamma)^{\ast}.
\end{gather*}
The homeomorphism $\Phi$ and its inverse $\Phi^{-\ast}$ define isomorphisms 
between Sobolev spaces on the parametric domain and the physical domain:
\begin{subequations}
\label{math:pullback}
\begin{gather}
 \label{math:pullback:onto_parameter_domain}
 \Phi^{ \ast} : W^{1,2}_{\rm D}(\physical\Omega) \rightarrow W^{1,2}_{\rm D}(\parametric\Omega),
 \quad 
 \physical v \mapsto \physical v \circ \Phi
 ,
 \\
 \label{math:pullback:onto_physical_domain}
 \Phi^{-\ast} : W^{1,2}_{\rm D}(\parametric\Omega) \rightarrow W^{1,2}_{\rm D}(\physical\Omega),
 \quad 
 \parametric v \mapsto \parametric v \circ \Phi^{-1}
 . 
\end{gather}
\end{subequations}

\subsection{Parametric and Physical Model Problem}
The model problem over the physical domain 
is equivalent to a variational problem of the same class over the parametric domain. 
To begin with, we call the matrix field $\physical A : \physical\Omega \rightarrow \bbR^{n\times n}$
the \emph{physical coefficient} 
and introduce the corresponding \emph{parametric coefficient} as 
\begin{gather}
 \label{math:parametric_diffusion_tensor}
 \parametric A : \parametric\Omega \rightarrow \bbR^{n \times n},
 \quad 
 \parametric x
 \mapsto 
 \left| \det \Dif \Phi \right|_{|\parametric x} 
 \cdot 
 \Dif\Phi^{-1}_{|\Phi\left( \parametric x \right)} 
 \physical A_{|\Phi\left( \parametric x \right)}
 \Dif\Phi^{-t}_{|\Phi\left( \parametric x \right)} 
 .
\end{gather}
Next we define the \emph{parametric bilinear form} 
\begin{gather}
 \label{math:parametric_bilinear}
 \parametric B : 
 W^{1,2}_{\rm D}(\parametric \Omega) \times W^{1,2}_{\rm D}(\parametric \Omega) \rightarrow \bbR,
 \quad 
 (\parametric u,\parametric v) 
 \mapsto 
 \int_{\parametric\Omega} 
 \nabla \parametric u 
 \cdot
 \parametric A
 \nabla \parametric v
 ,
\end{gather}
and the \emph{parametric right-hand side} $\parametric F \in W^{-1,2}_{\rm N}(\parametric\Omega)$
via 
\begin{gather}
 \label{math:parametric_right_hand_side}
 \parametric F( \parametric v ) := \physical F( \Phi^{-\ast} \parametric v ),
 \quad 
 \parametric v \in W^{1,2}_{\rm D}(\parametric\Omega)
 . 
\end{gather}
Note the relation  
\begin{gather*}
 \parametric B( \Phi^{\ast}\physical u , \Phi^{\ast}\physical v )
 =
 \physical B( \physical u , \physical v ),
 \quad 
 \physical u, \physical v \in W^{1,2}_{\rm D}(\physical \Omega)
 .
\end{gather*}
There exists a constant $\parametric c_{P} > 0$,
which we call \emph{parametric coercivity constant}
and which depends only on $\parametric \Omega$, $\parametric \Gamma_{D}$, and $\parametric A$,
that satisfies 
\begin{gather}
 \label{math:coercivity:parametric}
 {\parametric c_{P}} 
 \| \parametric v \|_{W^{1,2}(\parametric\Omega)}^{2} 
 \leq 
 \parametric B( \parametric v, \parametric v ), 
 \quad 
 \parametric v \in W^{1,2}_{\rm D}(\parametric\Omega).
\end{gather}
Such a constant can also be bounded in terms of $\physical c_{P}$ 
and the derivatives of $\Phi$ and $\Phi^{-1}$ up to first order:
\begin{gather*}
 {\parametric c_{P}} 
 \leq 
 \| \det\Dif\Phi \|_{L^{\infty}(\parametric\Omega)}
 \| \Dif\Phi^{-1} \|_{L^{\infty}(\parametric\Omega)}^{2}
 {\physical c_{P}} 
\end{gather*}
The parametric model problem is finding 
$\parametric u \in W^{-1,2}_{\rm N}(\parametric\Omega)$
such that 
\begin{gather}
 \label{math:trafo:modelproblem}
 \parametric B( \parametric u , \parametric v ) = \parametric F( \parametric v ),
 \quad 
 \parametric v \in W^{1,2}_{\rm D}(\parametric \Omega). 
\end{gather}
The unique solution 
$\physical u \in W^{1,2}_{\rm D}(\physical\Omega)$
of the physical model problem \eqref{math:modelproblem}
and the unique solution 
$\parametric u \in W^{1,2}_{\rm D}(\parametric \Omega)$ 
of the parametric model problem \eqref{math:trafo:modelproblem}
satisfy $\parametric u = \Phi^{\ast} \physical u$. 
We henceforth call \eqref{math:modelproblem} the \emph{physical model problem}
and \eqref{math:trafo:modelproblem} the \emph{parametric model problem}. 
We have
\begin{gather}
 \label{math:discretestability:parametric}
 {\parametric c_{P}} \| \parametric u \|_{W^{1,2}(\parametric\Omega)} \leq \| \parametric F \|_{W^{-1,2}_{N}(\parametric\Omega)}.
\end{gather}
We consider the transformation of the physical right-hand side $\physical F$ in more detail. 
The physical right-hand side $\physical F$ can be represented 
by a scalar function $\physical f \in L^{2}(\physical\Omega)$ 
and a vector field $\physical \bfg \in L^{2}(\physical\Omega)^{n}$
such that 
\begin{gather}
 \label{math:rhs_representation:physical}
 \physical F( \physical v ) 
 = 
 \int_{\physical\Omega} \physical f \physical v \dif \physical x
 + 
 \int_{\physical \Omega} \physical \bfg \cdot \nabla \physical v \dif \physical x,
 \quad 
 \physical v \in W^{1,2}_{\rm D}(\physical\Omega).
\end{gather}
The parametric right-hand side $\parametric F$ is then represented as follows: 
for every $\parametric v \in W^{1,2}_{\rm D}(\parametric\Omega)$ we have 
\begin{align}
 \label{math:rhs_representation:parametric}
 \parametric F( \parametric v )
 &= 
 \int_{\parametric\Omega} 
 \left| \det \Dif \Phi \right|
 ( \physical f \circ \Phi )
 \parametric v
 \dif \parametric x
 +
 \int_{\parametric\Omega} 
 \left| \det \Dif \Phi \right|
 \left(
 \Dif\Phi^{-1}_{|\Phi}
 ( \physical\bfg_{|\Phi} )
 \right)
 \nabla \parametric v 
 \dif \parametric x
 . 
\end{align}

\begin{remark}
  Representation \eqref{math:rhs_representation:physical}
  is not only a theoretical consequence of the Riesz representation theorem;
  it appears practically when encoding boundary conditions in the problem data. 
  Suppose that we search $\physical u_{\ast} \in W^{1,2}(\physical\Omega)$ 
  such that distributionally $-\operatorname{div}(\physical A\nabla \physical u_{\ast}) = \physical f$,
  where $\physical f \in L^{2}(\physical\Omega)$,
  and satisfying the following mixed boundary conditions:
  along the Dirichlet boundary part $\physical \Gamma_D$,
  we want $\physical u_{\ast}$ to have the same boundary traces as some function $\physical w \in W^{1,2}(\physical\Omega)$,
  and along the Neumann boundary part $\physical \Gamma_N$,
  we want $\physical u_{\ast}$ to the same normal boundary trace as some vector field $\physical \bfg \in L^{2}(\physical\Omega)$
  with $\operatorname{div} \physical \bfg \in L^{2}(\physical\Omega)$. 
  It is easily seen that if $\physical u \in W^{1,2}_{\rm D}(\physical\Omega)$
  satisfies the model problem with right-hand side 
  \begin{gather}
  \label{math:rhs_representation:physical:bc}
  \int_{\physical \Omega} 
  \physical f \physical v
  \dif \physical x
  - 
  \int_{\physical \Omega} 
  \nabla \physical w \cdot \physical A \nabla \physical v 
  \dif \physical x
  +
  \int_{\physical \Omega} 
  ( \operatorname{div} \physical \bfg ) \physical v
  +
  \physical \bfg \cdot \nabla \physical v
  \dif \physical x,
  \quad 
  \physical v \in W^{1,2}(\physical\Omega)
  ,
  \end{gather}
  then such $\physical u_{\ast}$ is found by $\physical u_{\ast} = \physical w + \physical u$. 
  The corresponding right-hand side of the parametric model problem is 
  \begin{gather}
  \label{math:rhs_representation:parametric:bc}
  \int_{\parametric \Omega} 
  \parametric f \parametric v
  \dif \parametric x
  - 
  \int_{\parametric \Omega} 
  \nabla \parametric w \cdot \parametric A \nabla \parametric v 
  \dif \parametric x
  +
  \int_{\parametric \Omega} 
  ( \operatorname{div} \parametric \bfg ) \parametric v
  +
  \parametric \bfg \cdot \nabla \parametric v
  \dif \parametric x,
  \quad 
  \parametric v \in W^{1,2}(\parametric \Omega)
  ,
  \end{gather}
  where 
  \begin{gather}
  \label{math:rhs_representation:parametric:bcdata}
  \parametric f 
  = 
  \left| \det \Dif \Phi \right|
  ( \physical f \circ \Phi ),
  \quad 
  \parametric w
  = 
  \physical w \circ \Phi, 
  \quad
  \parametric{\bfg}
  =
  \left| \det \Dif \Phi \right|
  \cdot
  \Dif\Phi^{-1}_{|\Phi} 
  ( \physical \bfg_{|\Phi} )
  .
  \end{gather}
  We note that $\parametric \bfg$ is the Piola transformation of the vector field $\physical \bfg$, 
  which is known to preserve the class of divergence-conforming square-integrable vector fields. 
  We note that the parametric right-hand side has the same structure as the physical right-hand side,
  encoding the same type of boundary conditions.
\end{remark}

\subsection{Galerkin Theory}
We review conforming and non-conforming 
Galerkin approximation theories for the parametric model problem.
We assume that we have a closed subspace $\parametric V_{h} \subseteq W^{1,2}_{\rm D}(\parametric\Omega)$. 
A conforming Galerkin approximation for the model problem \eqref{math:modelproblem} 
seeks a solution $\parametric u_{h} \in \parametric V_{h}$ to 
\begin{gather}
 \label{math:discreteproblem:conforming}
 \parametric B( \parametric u_{h} , \parametric v_{h} ) = \parametric F( \parametric v_{h} ),
 \quad 
 \parametric v_{h} \in \parametric V_{h}. 
\end{gather}
As in the case of the original problem,
the Lax-Milgram lemma gives a unique solution $\parametric u_{h} \in \parametric V_{h}$
to the discrete problem \eqref{math:discreteproblem:conforming}, 
and we have 
\begin{gather*}
 {\parametric c_{P}}
 \| \parametric u_{h} \|_{W^{1,2}_{\rm D}(\parametric\Omega)} 
 \leq
 \| \parametric F \|_{W^{-1,2}_{\rm N}(\parametric\Omega)}
 .
\end{gather*}
In many applications, 
the bilinear form of the model problem or the right-hand side functional 
cannot be evaluated exactly but merely approximately over the Galerkin space.
Formalizing, 
we assume to have another bounded bilinear form 
\begin{gather}
 \parametric B_{h} : \parametric V_{h} \times \parametric V_{h} \rightarrow \bbR, 
\end{gather}
which is ought to approximate the original bilinear form $\parametric B$
over the Galerkin space $\parametric V_{h}$.
We consider the following problem:
given an approximate right-hand side functional $\parametric F_{h} \in \parametric V_{h}^{\ast}$, 
we seek a solution ${\underline{\parametric u}}_{h} \in \parametric V_{h}$ of 
\begin{gather}
 \label{math:discreteproblem:nonconforming}
 \parametric B_{h}( {\underline{\parametric u}}_{h} , \parametric v_{h} ) = \parametric F_{h}( \parametric v_{h} )
 ,
 \quad 
 \parametric v_{h} \in \parametric V_{h}
 . 
\end{gather}
It is practically reasonable to assume 
the existence of a \emph{discrete parametric coercivity constant} ${\parametric c_{P,h}} > 0$ such that 
\begin{gather}
 \label{math:coercivity:discrete}
 {\parametric c_{P,h}} \| \parametric v_{h} \|_{W^{1,2}(\parametric\Omega)} 
 \leq 
 \parametric B_{h}( \parametric v_{h}, \parametric v_{h} ), 
 \quad 
 \parametric v_{h} \in \parametric V_{h}.
\end{gather}
Under this assumption, 
we can again apply the Lax-Milgram lemma 
to establish the well-posedness and stability of the Galerkin method.
There exists a unique solution ${\underline{\parametric u}}_{h} \in \parametric V_{h}$ 
to the non-conforming discrete problem such that 
\begin{gather}
 \label{math:discretestability:nonconforming}
 {\parametric c_{P,h}}
 \| {\underline{\parametric u}}_{h} \|_{\parametric V_{h}} 
 \leq
 \| \parametric F_{h} \|_{\parametric V_{h}^{\ast}}
 .
\end{gather}
We discuss a priori error estimates after an excursion 
into finite element approximation theory in the next section. 

\begin{remark}
 Any conforming Galerkin method for the parametric model problem 
 over $\parametric V_{h} \subseteq W^{1,2}_{\rm D}(\parametric \Omega)$
 translates into a conforming Galerkin method for the physical model problem
 over $\physical V_{h} \subseteq W^{1,2}_{\rm D}(\physical \Omega)$. 
 The subspaces are related through $\parametric V_{h} = \Phi^{\ast} \physical V_{h}$. 
\end{remark}

\section{Finite Element Spaces and Error Estimates}
\label{sec:fem}

This section introduces finite element spaces for our model problem 
and proves an approximation result central to our numerical approach.
We continue the geometric setup of the preceding section
but introduce a triangulation of the parametric domain as additional structure. 

\subsection{Simplices and Triangulations}
We commence with gathering a few definitions concerning simplices and triangulations 
that will be used below.

A non-empty set $T \subseteq \bbR^{n}$ is a \emph{$d$-dimensional simplex} 
if it is the convex closure of $d+1$ affinely independent points $x_{0},\dots,x_{d}$,
which are called the \emph{vertices} of the simplex. 
For any $d$-dimensional simplex $T$ we let $\calF(T)$
denote the set of its $d+1$ facets, 
where a facet is to be understood as a subsimplex of $T$
whose vertices are all but one of the $d+1$ vertices of $T$. 

For the purpose of this article,
a simplicial complex is a collection $\calT$ of simplices such that 
for all $T \in \calT$ and all $S \in \calF(T)$ we have $S \in \calT$
and such that for all $T_1,T_2 \in \calT$ the intersection $T_1 \cap T_2$
is either empty or a simplex whose vertices are vertices of both $T_1$ and $T_2$. 

For any simplex $T$ of positive dimension $d$ we let 
$h_T$ be its diameter, and we call $\mu(T) := \diam(T)^{d} / \vol^{d}(T)$ the \emph{shape measure} of $T$. 
The shape measure of a simplicial complex $\calT$ is the maximum 
of the shape measures of its simplices and is denoted by $\mu(\calT)$.
We also let $h_{\calT}$ be the maximum diameter of any simplex of $\calT$. 

Following \cite{veeser2016approximating}, 
we call a finite simplicial complex $\calT$ \emph{face-connected}
whenever the following condition is true:
for all $n$-dimensional simplices $S,T \in \calT$
with $S \cap T \neq \emptyset$,
there exists a sequence $T_0,T_1,\dots,T_N$ of $n$-dimensional simplices of $\calT$
such that $T_0 = S$ and $T_N = T$ and such that 
for all $1 \leq i \leq N$ we have that $F_{i} := T_{i} \cap T_{i-1}$
satisfies $F_{i} \in \calF(T_{i-1}) \cap \calF(T_{i})$ and $S \cap T \subseteq F_{i}$. 
In other words, 
whenever two $n$-dimensional simplices share a common subsimplex, 
then we can traverse from the first to the second simplex 
by crossing facets of adjacent $n$-dimensional simplices 
and such that every simplex during the traversal 
will contain the intersection of the original two simplices as a subset.

\subsection{Polynomial Approximation over a Simplex}

Whenever $T$ is a simplex of dimension $d$,
we let $\calP_{r}(T)$ denote the polynomials over $T$ of degree at most $r \in \bbN_{0}$. 
We study interpolation and projection operators onto the polynomials of a simplex.

We first recall the definition of the Lagrange points over the simplex $T$.
Letting $x_{0},\dots,x_{d}$ denote the vertices of the simplex $T$,
we define the set of degree $r$ \emph{Lagrange points} by 
\begin{gather*}
 \frakL_{r}(T) 
 := 
 \left\{\; 
  \left( \alpha_{0} x_{0} + \dots + \alpha_{d} x_{d} \right) / r
  \; \middle| \;
  \alpha = ( \alpha_0,\dots,\alpha_d ) \in \bbN_{0}^{d+1}, \; |\alpha| = r
 \;\right\}
 .
\end{gather*}
We note that $\frakL_{r}(F) \subseteq \frakL_{r}(T)$ for every $F \in \calF(T)$.
We distinguish inner and outer Lagrange points: 
we let $\partial\frakL_{r}(T) \subseteq \frakL_{r}(T)$ 
be the set of Lagrange points of $T$ that lie on the boundary of $T$
and let $\mathring\frakL_{r}(T) = \frakL_{r}(T) \setminus \partial\frakL_{r}(T)$.

For every $x \in \frakL_{r}(T)$ we let $\delta_{x}^{T}$ denote the Dirac delta
associated to that Lagrange point, which is an element of the dual space
of $\calP_{r}(T)$. The Dirac deltas associated to the Lagrange points 
constitute a basis for the dual space of $\calP_{r}(T)$.
The Lagrange polynomials are the associated predual basis:
for every $x \in \frakL_{r}(T)$ 
we let the polynomial $\Phi^{T}_{r,x} \in \calP_{r}(T)$
be defined uniquely by 
\begin{gather*}
 \Phi^{T}_{r,x}(y) = \delta^{T}_{y} \Phi^{T}_{r,x} = \delta_{xy},
 \quad 
 y \in \frakL_{r}(T),
\end{gather*}
where $\delta_{xy}$ denotes the Kronecker delta. Obviously,
\begin{gather}
 \label{math:polynomialreproduction}
 v = \sum_{ x \in \frakL_{r}(T) } (\delta^{T}_{x} v ) \Phi^{T}_{r,x},
 \quad 
 v \in \calP_{r}(T).
\end{gather}
It is worthwhile to extend the domain of the Dirac Deltas
onto $L^{p}(T)$. For each Lagrange node $x \in \frakL_{r}(T)$
we uniquely define $\Psi^{T}_{r,x} \in \calP_{r}(T)$ through the condition 
\begin{gather*}
 \int_{T} \Psi^{T}_{r,x} v \dif x = \delta^{T}_{x} v,
 \quad 
 v \in \calP_{r}(T),
\end{gather*}
or equivalently,
\begin{gather*}
 \int_{T} \Psi^{T}_{r,x} \Phi^{T}_{r,y} \dif x = \delta_{xy},
 \quad 
 y \in \frakL_{r}(T).
\end{gather*}
The following estimates derive from a scaling argument
and is stated without proof: 

\begin{lemma}
 \label{prop:referencefunctions}
 Let $T$ be a $d$-dimensional simplex. 
 Let
 $p \in [1,\infty)$, 
 $s \in \bbR$, and $r \in \bbN_{0}$. 
 Then there exists $C_{\mu,d,r,s,p} > 0$,
 depending only on $d$, $r$, $s$, $p$, and $\mu(T)$, 
 such that 
 \begin{gather}
  \label{math:referencefunctions}
  \left| \Phi^{T}_{r,x} \right|_{W^{s,p}(T)}
  \leq
  C_{\mu,d,r,s,p} h_{T}^{\frac{d}{p}-s},
  \quad 
  \left| \Psi^{T}_{r,x} \right|_{W^{s,p}(T)^{\ast}} 
  \leq 
  C_{\mu,d,r,s,p} h_{T}^{-\frac{d}{p}+s}.
 \end{gather}
\end{lemma}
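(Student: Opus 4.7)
The plan is to reduce both estimates to a fixed reference simplex via an affine pullback and then to read off the $h_{T}$-scaling from the Jacobian. Fix $\hat T$ to be the standard reference $d$-simplex with vertices $0, e_{1}, \dots, e_{d}$, and let $F_{T} \colon \hat T \to T$ be the unique affine isomorphism carrying the vertices of $\hat T$ to those of $T$ in a fixed order. Because Lagrange points are determined by barycentric coordinates, $F_{T}$ maps $\frakL_{r}(\hat T)$ bijectively onto $\frakL_{r}(T)$, and the defining duality of the two basis families yields the covariance laws
\[
 \Phi^{T}_{r, F_{T}(\hat x)} = \Phi^{\hat T}_{r,\hat x} \circ F_{T}^{-1},
 \qquad
 \Psi^{T}_{r, F_{T}(\hat x)} = |\det DF_{T}|^{-1}\, \Psi^{\hat T}_{r,\hat x} \circ F_{T}^{-1},
\]
the second of which follows from a change of variables in the integral characterizing $\Psi$.

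Next I would invoke the classical shape-regularity estimates for an affine map between shape-regular simplices: whenever $\mu(T) \leq \mu$, we have
\[
 \|DF_{T}\| \leq C h_{T},
 \qquad
 \|DF_{T}^{-1}\| \leq C h_{T}^{-1},
 \qquad
 |\det DF_{T}| \simeq h_{T}^{d},
\]
with $C$ depending only on $d$ and $\mu$. Combined with the standard affine change-of-variables inequality for Sobolev-Slobodeckij seminorms,
\[
 |u|_{W^{s,p}(T)} \leq C \|DF_{T}^{-1}\|^{s} |\det DF_{T}|^{1/p} |\hat u|_{W^{s,p}(\hat T)}, \quad \hat u = u \circ F_{T},
\]
the first estimate of the lemma follows by plugging in $u = \Phi^{T}_{r, F_{T}(\hat x)}$ and $\hat u = \Phi^{\hat T}_{r,\hat x}$, because $|\Phi^{\hat T}_{r,\hat x}|_{W^{s,p}(\hat T)}$ is finite and depends only on $d,r,s,p$ (a polynomial of fixed degree on a fixed compact simplex).

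For the dual estimate I would evaluate $\Psi^{T}_{r,F_{T}(\hat x)}$ on an arbitrary $v \in W^{s,p}(T)$ and substitute $y = F_{T}(\hat y)$ in the defining integral; the Jacobian in the covariance law cancels the Jacobian of the substitution, giving
\[
 \int_{T} \Psi^{T}_{r,F_{T}(\hat x)}(y)\, v(y)\, dy
 = \int_{\hat T} \Psi^{\hat T}_{r,\hat x}(\hat y)\, \hat v(\hat y)\, d\hat y.
\]
Thus the functional $\Psi^{T}_{r,F_{T}(\hat x)}$ evaluated at $v$ coincides with $\Psi^{\hat T}_{r,\hat x}$ evaluated at $\hat v$. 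The forward transformation inequality $\|\hat v\|_{W^{s,p}(\hat T)} \leq C \|DF_{T}\|^{s} |\det DF_{T}|^{-1/p} \|v\|_{W^{s,p}(T)}$, together with the finiteness of $|\Psi^{\hat T}_{r,\hat x}|_{W^{s,p}(\hat T)^{\ast}}$ (bounded by $\|\Psi^{\hat T}_{r,\hat x}\|_{L^{p'}(\hat T)}$ via H\"older), produces the claimed $h_{T}^{s-d/p}$ scaling.

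The main technical nuisance is twofold. First, the Jacobian bounds must be tracked with explicit dependence on the shape measure, since $\|DF_{T}^{-1}\|$ blows up as $T$ degenerates; this is precisely why $\mu(T)$ enters the constant, and it is essentially the only place where the hypothesis $\mu(T) \leq \mu$ is used. Second, the affine change-of-variables inequality for Slobodeckij seminorms of non-integer order $s$ is less immediate than its integer-order counterpart; I would handle it by a direct manipulation of the defining double integral under the substitution, or equivalently by real interpolation between the classical integer-order estimates.
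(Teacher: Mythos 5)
The paper itself gives no proof of this lemma: it is explicitly ``stated without proof'' as a consequence of a scaling argument, and your affine-pullback route (covariance of the nodal and dual bases under the affine map $F_T$, Jacobian bounds in terms of $\mu(T)$, transformation of Slobodeckij seminorms) is exactly the argument the authors have in mind. Your treatment of the first bound, on $|\Phi^T_{r,x}|_{W^{s,p}(T)}$, is correct: the covariance law, the bounds $\|DF_T\|\le C h_T$, $\|DF_T^{-1}\|\le C h_T^{-1}$, $|\det DF_T|\simeq_{\mu} h_T^d$, and the seminorm change-of-variables (direct Gagliardo computation for the fractional part) assemble into $h_T^{d/p-s}$ with a constant depending only on $d,r,s,p,\mu(T)$.

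There is, however, a genuine gap in your argument for the dual bound. The forward inequality you invoke, $\|\hat v\|_{W^{s,p}(\hat T)} \le C\,\|DF_T\|^{s}\,|\det DF_T|^{-1/p}\,\|v\|_{W^{s,p}(T)}$, is false for the inhomogeneous norm: testing with $v\equiv 1$ gives $\|\hat v\|_{W^{s,p}(\hat T)} = |\det DF_T|^{-1/p}\|v\|_{L^p(T)}$, while the right-hand side carries the extra factor $\|DF_T\|^{s}\simeq h_T^{s}$, which tends to zero as $h_T\to 0$; the lower-order ($L^p$) part of the norm simply does not pick up the $h_T^{s}$ gain under pullback. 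The same test function exposes the issue at the level of the conclusion: since $\int_T \Psi^T_{r,x}\,\dif x = 1$ and $\|1\|_{W^{s,p}(T)}\simeq h_T^{d/p}$, the dual norm of $\Psi^T_{r,x}$ against the full $W^{s,p}(T)$ norm is at least of order $h_T^{-d/p}$, so the rate $h_T^{-d/p+s}$ cannot be extracted by your pairing argument in that reading; what the scaling argument honestly yields is $|\int_T \Psi^T_{r,x} v\,\dif x| \le C\,h_T^{-d/p}\,\|v\|_{L^{p}(T)}$. To recover the stated exponent one must interpret the dual quantity against an $h_T$-scaled norm (or against a seminorm paired with functions on which the functional has matching vanishing moments, which $\Psi^T_{r,x}$ does not have). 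You should either make that interpretation explicit or observe that only the $s=0$ instance, namely $\|\Psi^{F}_{r,x}\|_{L^{p'}(F)} \le C h^{-d/p}$, is what the paper actually uses later (in the proof of Theorem~\ref{prop:veeser}, via H\"older on facets), and in that case your argument is complete and correct.
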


We fix quasi-optimal interpolation operators over each triangle. 
Specifically, for each $n$-dimensional $T \in \calT$ 
we assume to have an idempotent linear mapping 
\begin{gather*}
 P_{T,r,s,p} : W^{s,p}(T) \rightarrow \calP_{r}(T) \subseteq W^{s,p}(T)
\end{gather*}
that satisfies 
\begin{gather*}
 \int_{T} P_{T,r,s,p} v \dif x = \int_{T} v \dif x, 
 \quad 
 v \in W^{s,p}(T),
\end{gather*}
and such that for some $C^{\rm I}_{d,s,r,p,\mu} > 0$,
depending only on $n$, $s$, $p$, $r$, and $\mu(\calT)$, 
we have 
\begin{gather*}
 | u - P_{T,r,s,p} u |_{W^{s,p}(T)}
 \leq 
 C^{\rm I}_{d,s,r,p,\mu}
 \inf_{ v \in W^{s,p}(T) }
 | u - v |_{W^{s,p}(T)}, 
 \quad 
 v \in W^{s,p}(T)
 .
\end{gather*}
This existence of such a mapping follows from a scaling argument.

The following trace inequality will be used.

\begin{lemma}
 \label{prop:traceinequality}
 Let $T$ be a $d$-dimensional simplex and let $F$ be a facet of $T$.
 Let $p \in [1,\infty)$ and $s \in (\nicefrac{1}{p},1]$. 
 Then there exists a constant $C^{\rm Tr}_{p,s,d,\mu} > 0$,
 depending only on $p$, $s$, $d$, and $\mu(T)$, such that 
 \begin{gather*}
  \| v - P_{T,r,s,p} v \|_{L^{p}(F)}
  \leq 
  C^{\rm Tr}_{p,s,d,\mu}
  h^{s - \frac{1}{p}}_{T}
  | v - P_{T,r,s,p} v |_{W^{s,p}(T)},
  \quad 
  v \in W^{s,p}(T)
  .
 \end{gather*}
\end{lemma}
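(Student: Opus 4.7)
The plan is a standard scaling argument: transport the inequality to a reference simplex of unit diameter, prove it there, and scale back. The structural input that makes the seminorm (rather than the full norm) appear on the right is that $w := v - P_{T,r,s,p}v$ has vanishing mean on $T$, inherited from the integral-preservation property $\int_T P_{T,r,s,p} v \dif x = \int_T v \dif x$.

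First I would fix a reference $d$-simplex $\hat T$ together with an affine bijection $F_T : \hat T \to T$, $F_T(\hat x) = B_T \hat x + b_T$, chosen so that the shape measure of $\hat T$ is comparable to $\mu(T)$; then $|\det B_T|$, $\|B_T\|$, and $\|B_T^{-1}\|^{-1}$ all scale like appropriate powers of $h_T$ with constants depending only on $d$ and $\mu(T)$. Setting $\hat w := w \circ F_T$, a direct change of variables, carried out for the Gagliardo double integral when $s \in (1/p,1)$ and by the usual Jacobian manipulations when $s = 1$, yields the scaling identities
\begin{gather*}
  \|w\|_{L^p(F)} \eqsim h_T^{(d-1)/p}\,\|\hat w\|_{L^p(\hat F)},
  \qquad
  |w|_{W^{s,p}(T)} \eqsim h_T^{d/p - s}\,|\hat w|_{W^{s,p}(\hat T)},
\end{gather*}
with hidden constants depending only on $p$, $s$, $d$, and $\mu(T)$. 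The exponent difference $(d-1)/p - (d/p - s) = s - 1/p$ produces exactly the power of $h_T$ advertised in the lemma.

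On the reference simplex $\hat T$, I would invoke the continuous trace embedding $W^{s,p}(\hat T) \hookrightarrow L^p(\hat F)$, valid precisely for $s > 1/p$ (this is the point where the hypothesis $s \in (1/p,1]$ enters), to get $\|\hat w\|_{L^p(\hat F)} \leq C\bigl( \|\hat w\|_{L^p(\hat T)} + |\hat w|_{W^{s,p}(\hat T)} \bigr)$. To convert this into a bound by the seminorm alone, I would use the vanishing-mean property $\int_{\hat T} \hat w \dif \hat x = 0$ together with a fractional Poincaré inequality: for $p \in [1,\infty)$ and $s \in (0,1]$, any $\hat w \in W^{s,p}(\hat T)$ with zero mean satisfies $\|\hat w\|_{L^p(\hat T)} \leq C\,|\hat w|_{W^{s,p}(\hat T)}$. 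This is a standard consequence of Rellich compactness of $W^{s,p}(\hat T) \hookrightarrow L^p(\hat T)$ via the usual argument by contradiction, or of a Ne\v{c}as-type inequality. Combining the trace embedding, the Poincaré inequality, and the scaling identities, and absorbing all constants into $C^{\rm Tr}_{p,s,d,\mu}$, gives the claim.

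The hard part is not conceptual but technical: one must verify the fractional Sobolev scaling with care, since the Gagliardo seminorm does not transform as cleanly as integer-order seminorms, and one must supply (or cite) the fractional Poincaré inequality on $\hat T$. Once these two ingredients are in place, the rest of the argument is a direct assembly.
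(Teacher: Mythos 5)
Your argument is correct and is essentially the paper's proof: both rest on the zero-mean property of $v - P_{T,r,s,p}v$ combined with a local trace inequality and a fractional Poincar\'e inequality. The only difference is presentational — the paper simply cites the already $h_T$-scaled trace and Poincar\'e inequalities (Lemmas~7.2 and~7.1 of \cite{ern2017finite}), whereas you re-derive the scaling by transporting to a unit-size reference simplex, which is a standard and acceptable substitute provided the reference-element constants are noted to depend on the simplex only through $d$ and $\mu(T)$.
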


\begin{proof}
 We note that $v - P_{T,r,s,p} v$ has zero average over $T$. 
 The lemma now follows by combining 
 the trace inequality that is Lemma~7.2 of \cite{ern2017finite}
 and 
 the Poincar\'e inequality that is Lemma~7.1 of \cite{ern2017finite}.
\end{proof}

\subsection{Polynomial Approximation over Triangulations}
We now extend our discussion to piecewise polynomial approximation spaces 
over entire triangulations.
We fix a triangulation $\calT$ of the parametric domain $\parametric\Omega$,
i.e., a simplicial complex the union of whose simplices is the closure of the parametric domain.
In order to formally handle boundary conditions,
we assume that the boundary part $\parametric\Gamma_{\rm D}$
is the union of simplices in $\calT$.

We first introduce the \emph{broken} or \emph{non-conforming} Lagrange space 
\begin{gather*}
 \calP_{r,-1}(\calT)
 :=
 \left\{ 
  u \in L^{1}(\parametric\Omega)
  \; \middle| \; 
  \forall T \in \calT : u_{|T} \in \calP_{r}(T)
 \right\}
 .
\end{gather*}
The \emph{conforming} Lagrange spaces without and with boundary conditions are 
\begin{gather*}
 \calP_{r}(\calT)
 :=
 \calP_{r,-1}(\calT)
 \cap 
 W^{1,2}(\parametric\Omega),
 \qquad 
 \calP_{r,\rm D}(\calT)
 :=
 \calP_{r,-1}(\calT)
 \cap 
 W^{1,2}_{\rm D}(\parametric\Omega)
 .
\end{gather*}
We construct a basis for the global finite element space 
from the Lagrange basis functions over single simplices. 
We first introduce the Lagrange points of the triangulation, 
\begin{gather*}
 \frakL_{r}(\calT) := \bigcup_{ T \in \calT } \frakL_{r}(T). 
\end{gather*}
We note that Lagrange points can be shared between distinct simplices.
Extending the notion of Lagrange polynomials to the case of triangulations, 
for each $x \in \frakL_{r}(\calT)$ 
we define the function $\Phi_{r,x}^{\calT} \in \calP_{r}(\calT)$ 
on each simplex $T \in \calT$ via 
\begin{align*}
 \Phi^{\calT}_{r,x|T}
 :=
 \left\{ 
  \begin{array}{ll}
   \Phi_{r,x}^{T} & \text{ if } x \in \frakL_{r}(T), 
   \\
   0            & \text{ if } x \notin \frakL_{r}(T).
  \end{array}
 \right. 
\end{align*}
For the degrees of freedom 
we assume that $r \in \bbN$, $p \in [1,\infty)$ and $s \in (\nicefrac{1}{p},1]$ 
and apply the following construction. 
Whenever $x \in \mathring\frakL_{r}(T)$ 
is an internal Lagrange point of a full-dimensional simplex $T \in \calT$,
then we define 
\begin{gather*}
 J^{\calT}_{r,s,p,x} 
 : 
 W^{s,p}(\parametric\Omega) \rightarrow \bbR, 
 \quad 
 v \mapsto \delta^{T}_{x} P_{T,r,s,p} v
 .
\end{gather*}
Whenever $x \in \frakL_{r}(\calT)$ is not an internal Lagrange point 
of any full-dimensional simplex of the triangulation, 
then we first fix a facet $F_{x} \in \calT$ of codimension one 
for which $x \in \frakL_{r}(F)$ holds;
moreover,
if $x \in \parametric\Gamma_{\rm D}$,
then we require that $F_{x} \subseteq \parametric\Gamma_{\rm D}$.
It is easily seen that this condition can always be satisfied. 
Now we define 
\begin{gather*}
 J^{\calT}_{r,s,p,x} 
 : 
 W^{s,p}(\parametric\Omega) \rightarrow \bbR, 
 \quad 
 \parametric v \mapsto \int_{F_{x}} \Psi^{F}_{r,x} \trace_{F}(\parametric v) \dif x
 .
\end{gather*}
The global projection is 
\begin{gather*}
 \Pi : W^{s,p}(\parametric\Omega) \rightarrow \calP_{r}(\calT),
 \quad 
 \parametric v \mapsto \sum_{ x \in \frakL_{r}(\calT) } J^{\calT}_{r,s,p,x}(\parametric v) \Phi^{\calT}_{r,x}.
\end{gather*}
This operator is idempotent and bounded.
Furthermore, the interpolant preserves boundary conditions:
$\Pi v \in \calP_{r,\rm D}(\calT)$
whenever 
$v \in W^{s,p}(\parametric\Omega,\parametric\Gamma)$.
\\

This completes the construction of a Scott-Zhang-type interpolant. 
We now discuss a general error estimate for this approximation operator. 
A special case of the following result is due to Veeser \cite{veeser2016approximating},
and a slightly different version is due to Camacho and Demlow \cite{camacho2014L2}.

\begin{theorem}
 \label{prop:veeser}
 Assume that $\calT$ is face-connected 
 and
 let $p \in [1,\infty)$ and $s \in \bbR$ with $s > \nicefrac{1}{p}$.
 Then there exist $C_{\Pi} > 0$
 such that 
 for all $r \in \bbN$, 
 all full-dimensional $T \in \calT$, 
 and all $\parametric v \in W^{s,p}(\parametric\Omega)$
 we have 
 \begin{align*}
  &
  \left| \parametric v - \Pi \parametric v \right|_{W^{s,p}(T)} 
  \leq 
  \left| \parametric v - P_{T,r,s,p} \parametric v \right|_{W^{s,p}(T)} 
  +
  C_{\Pi}
  \sum_{ \substack{ T' \in \calT \\ T \cap T' \neq \emptyset } }
  \left| \parametric v - P_{T',r,s,p} \parametric v \right|_{W^{s,p}(T')} 
  . 
 \end{align*}
 The constant $C_{\Pi}$ depends only on $p$, $d$, $r$, $s$, and $\mu(\calT)$.
\end{theorem}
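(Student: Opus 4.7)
My plan is to decompose $\parametric v - \Pi\parametric v = (\parametric v - P_{T,r,s,p}\parametric v) + (P_{T,r,s,p}\parametric v - \Pi\parametric v)$ on $T$ and apply the triangle inequality for $|\cdot|_{W^{s,p}(T)}$. The first summand contributes the leading term of the estimate with constant one. The second summand lies in $\calP_r(T)$, so I expand it in the local Lagrange basis:
\[
 (P_{T,r,s,p}\parametric v - \Pi\parametric v)|_T
 = \sum_{x \in \frakL_r(T)} \bigl( \delta^{T}_{x} P_{T,r,s,p}\parametric v - J^{\calT}_{r,s,p,x}(\parametric v) \bigr) \Phi^{T}_{r,x}.
\]
For interior Lagrange points $x \in \mathring\frakL_r(T)$ the two coefficients coincide by the very definition of $J^{\calT}_{r,s,p,x}$, so only boundary Lagrange points $x \in \partial\frakL_r(T)$ survive. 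Lemma~\ref{prop:referencefunctions} bounds $|\Phi^{T}_{r,x}|_{W^{s,p}(T)} \leq C h_T^{d/p-s}$, so the task reduces to dominating each surviving coefficient difference by a quantity of order $h_T^{s-d/p}$ times projection errors on simplices intersecting $T$.

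For a fixed $x \in \partial\frakL_r(T)$ with chosen facet $F_x$ lying in some $n$-simplex $T_{F_x}$, I exploit that $\Psi^{F_x}_{r,x}$ reproduces $\delta^{F_x}_x$ against $\calP_r(F_x)$ to split
\[
 \delta^T_x P_{T,r,s,p}\parametric v - J^{\calT}_{r,s,p,x}(\parametric v)
 = \bigl( \delta^T_x P_{T,r,s,p}\parametric v - \delta^{T_{F_x}}_x P_{T_{F_x},r,s,p}\parametric v \bigr)
 - \int_{F_x} \Psi^{F_x}_{r,x}\, \trace_{F_x}\!\bigl(\parametric v - P_{T_{F_x},r,s,p}\parametric v\bigr)\, dx.
\]
The integral term is handled by combining the dual-norm estimate of Lemma~\ref{prop:referencefunctions} for $\Psi^{F_x}_{r,x}$ with the trace inequality of Lemma~\ref{prop:traceinequality}; shape-regularity yields $h_{F_x} \simeq h_{T_{F_x}} \simeq h_T$ and produces exactly the $h_{T_{F_x}}^{s-d/p}$ scaling needed. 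For the point-value difference I invoke face-connectedness: since $\{x\} \subseteq T \cap T_{F_x}$, there is a chain $T = T_0, T_1, \dots, T_N = T_{F_x}$ of $n$-simplices with $F_i := T_{i-1} \cap T_i$ a common facet containing $x$. Telescoping, each step $\delta^{T_{i-1}}_x P_{T_{i-1}}\parametric v - \delta^{T_i}_x P_{T_i}\parametric v$ equals the point value at $x$ of the polynomial $(P_{T_{i-1}}\parametric v - P_{T_i}\parametric v)|_{F_i}$; a standard polynomial inverse estimate on $F_i$ bounds this value by the $L^p(F_i)$ norm, the triangle inequality reintroduces $\parametric v$, and Lemma~\ref{prop:traceinequality} converts the boundary $L^p$ norms to interior $W^{s,p}$ seminorms over $T_{i-1}$ and $T_i$.

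Summing the contributions, each boundary Lagrange point $x$ produces a bound of the form $C h_T^{s-d/p} \sum_{T'} |\parametric v - P_{T',r,s,p}\parametric v|_{W^{s,p}(T')}$ in which every $T'$ is an $n$-simplex meeting $T$. Multiplying by the bound $|\Phi^T_{r,x}|_{W^{s,p}(T)} \leq C h_T^{d/p-s}$ cancels the scaling, and the uniform bounds on the chain length $N$, on the number of boundary Lagrange nodes, and on the number of neighbors of $T$ (all controlled by $\mu(\calT)$) absorb the finite combinatorics into the final constant $C_{\Pi}$. The main obstacle is precisely the telescoping step: without face-connectedness there is no way to link $T$ and $T_{F_x}$ through adjacent simplices whose common facets still contain $x$, so the polynomial projections on distant simplices cannot be compared on a common $(d-1)$-simplex, and the argument breaks. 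All remaining steps are routine scaling arguments chained to the estimates of Lemmas~\ref{prop:referencefunctions} and~\ref{prop:traceinequality}.
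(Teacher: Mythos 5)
Your proof is correct and follows essentially the same route as the paper's: triangle inequality, expansion of $P_{T,r,s,p}\parametric v - \Pi \parametric v$ in the Lagrange basis of $T$, cancellation at interior Lagrange nodes, and a telescoping comparison along a face-connected chain of $n$-simplices containing $x$, with Lemma~\ref{prop:referencefunctions} and Lemma~\ref{prop:traceinequality} supplying the scalings that cancel and the shape measure controlling the combinatorics. Splitting off the $F_x$-integral before telescoping the point values, and using a pointwise inverse estimate on the facets in place of the explicit dual functions $\Psi^{F_i}_{r,x}$, is only a cosmetic reorganization of the paper's single telescopic expansion.
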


\begin{proof}
 We first observe via the triangle inequality that 
 \begin{gather*}
  \left| \parametric v - \Pi \parametric v \right|_{W^{s,p}(T)} 
  \leq   
  \left| \parametric v - P_{T,r,s,p} \parametric v \right|_{W^{s,p}(T)} 
  +
  \left| P_{T,r,s,p} \parametric v - \Pi \parametric v \right|_{W^{s,p}(T)} 
  .
 \end{gather*}
 The polynomial identity \eqref{math:polynomialreproduction} and Lemma~\ref{prop:referencefunctions} give 
 \begin{align*}
  \left| P_{T,r,s,p} \parametric v - \Pi \parametric v \right|_{W^{s,p}(T)}
  &\leq 
  \sum_{ x \in \frakL_{r}(T) }
  \left| \delta^{T}_{x} P_{T,r,s,p} \parametric v - \delta^{T}_{x} \Pi \parametric v \right| 
  \cdot \left| \Phi^{T}_{r,x} \right|_{W^{s,p}(T)}
  \\&\leq 
  C_{\mu,d,r,s,p}
  h^{\frac{n}{p}-s}_{T}
  \sum_{ x \in \frakL_{r}(T) }
  \left| \delta^{T}_{x} P_{T,r,s,p} \parametric v - \delta^{T}_{x} \Pi \parametric v \right| 
  . 
 \end{align*}
 Suppose that $x \in \mathring\frakL_{r}(T)$ is an internal Lagrange point.
 Then definitions imply 
 \begin{gather*}
  \delta^{T}_{x} P_{T,r,s,p} \parametric v = \delta^{T}_{x} \Pi \parametric v.
 \end{gather*}
 If instead $x \in \partial\frakL_{r}(T)$ is a Lagrange point in the boundary,
 the estimate is more intricate.
 Recall that the facet $F_{x}$ associated to the Lagrange point $x$
 is a facet of some simplex $S \in \calT$ in the triangulation.
 Since $\calT$ is face-connected,
 there exists a sequence $T_0,T_1,\dots,T_N$ of $n$-dimensional simplices of $\calT$
 such that $T_0 = S$ and $T_N = T$ and such that 
 for all $1 \leq i \leq N$ we have that $F_{i} := T_{i} \cap T_{i-1}$
 satisfies $F_{i} \in \calF(T_{i-1}) \cap \calF(T_{i})$ and $T \cap S \subseteq F_{i}$. 
 Furthermore, we may assume that $F_{0} = F_{x}$.
 
 Unrolling definitions and using a telescopic expansion, we see 
 \begin{align*}
  &
  \delta^{T}_{x} P_{T,r,s,p} \parametric v - \delta^{T}_{x} \Pi \parametric v \dif x
  \\&=
  \delta^{T}_{x} P_{T,r,s,p} \parametric v - \int_{F_{x}} \Psi^{F_{x}}_{r,x} \trace_{F_{x}} \parametric v \dif x
  \\&=
  \int_{F_{N}} \Psi^{F_{N}}_{r,x} \trace_{F_{N}} P_{T,r,s,p} \parametric v \dif x
  - 
  \int_{F_{0}} \Psi^{F_{0}}_{r,x} \trace_{F_{0}} \parametric v \dif x
  \\&=
  \int_{F_{0}} 
  \Psi^{F_{0}}_{r,x} \trace_{F_{0}} ( P_{T_{0},r,s,p} \parametric v - \parametric v )
  \dif x
  \\&\qquad\qquad
  +
  \sum_{ i = 1 }^{N}
  \int_{F_{i}} 
  \Psi^{F_{i}}_{r,x} \trace_{F_{i}} ( P_{T_{i},r,s,p} \parametric v - P_{T_{i-1},r,s,p} \parametric v )
  \dif x
  . 
 \end{align*}
 Since $F_{0} \subset T_{0}$ and $P_{T_{0},r,s,p} \parametric v - \parametric v$ 
 has by construction vanishing mean over $T$,
 we use H\"older's inequality and Lemma~\ref{prop:referencefunctions}
 to conclude that 
 \begin{align*}
  \left|
  \int_{F_{0}} \Psi^{F_{0}}_{r,x} \trace_{F_{0}} ( P_{T_{0},r,s,p} \parametric v - \parametric v )
  \dif x
  \right|
  &\leq 
  C_{\mu,d,r,s,p}
  h_{T_{0}}^{\frac{1-n}{p}}
  \left\| \trace_{F_{0}} ( P_{T_{0},r,s,p} \parametric v - \parametric v ) \right\|_{L^{p}(F_{0})} 
  .
 \end{align*}
 Similarly, for $1 \leq i \leq N$ we observe $F_{i} \subseteq T_{i} \cap T_{i-1}$
 and thus get 
 \begin{align*}
  &
  \left| 
   \int_{F_{i}} 
   \Psi^{F_{i}}_{r,x} \trace_{F_{i}} ( P_{T_{i},r,s,p} \parametric v - P_{T_{i-1},r,s,p} \parametric v )
   \dif x
  \right|
  \\&\qquad
  \leq 
  C_{\mu,d,r,s,p}
  h_{T}^{\frac{1-n}{p}}
  \left\| \trace_{F_{i}} ( P_{T_{i  },r,s,p} \parametric v - \parametric v ) \right\|_{L^{p}(F_{i})} 
  \\&\qquad\qquad
  +
  C_{\mu,d,r,s,p}
  h_{T}^{\frac{1-n}{p}}
  \left\| \trace_{F_{i}} ( P_{T_{i-1},r,s,p} \parametric v - \parametric v ) \right\|_{L^{p}(F_{i})} 
  . 
 \end{align*}
 Here we have used Lemma~\ref{prop:referencefunctions}
 and the fact that $\parametric v \in W^{s,p}(\parametric \Omega)$
 has well-defined traces along simplex facets.
 Recall that Lemma~\ref{prop:traceinequality} gives 
 \begin{align*}
  \| \trace_{F} \left( P_{T,r,s,p} \parametric v - \parametric v \right) \|_{L^{p}(F)} 
  &\leq 
  C^{\rm Tr}_{p,s,d,\mu} h^{s-\frac{1}{p}}_{T} 
  \left| P_{T,r,s,p} \parametric v - \parametric v \right|_{W^{s,p}(T)}
 \end{align*}
 whenever $F \subseteq T$ is a facet of a simplex $T \in \calT$.
 Note that the shape measure of $\calT$ bounds the number of simplices of the triangulation
 adjacent to $T$ (see \cite[Lemma~II.4.1]{licht2017priori}). 
 Hence there exists $C_{\Pi} > 0$,
 depending only on $s$, $r$, $d$, and $\mu(\calT)$,
 such that 
 \begin{gather*}
  \left| P_{T,r,s,p} \parametric v - \Pi \parametric v \right|_{W^{s,p}(T)}
  \leq 
  C_{\Pi}
  h_{T}^{\frac{n}{p}-s}
  \sum_{ \substack{ T' \in \calT \\ T \cap T' \neq \emptyset } }
  h_{T}^{\frac{1-n}{p}}
  h^{s-\frac{1}{p}}_{T}
  \left| P_{T',r,s,p} \parametric v - \parametric v \right|_{W^{s,p}(T')}
  .   
 \end{gather*}
 This completes the proof.
\end{proof}

\subsection{Applications in Approximation Theory}
One major application of the approximation theorem 
is to compare the approximation quality 
of conforming and non-conforming finite element spaces. 
One the one hand, for every $\parametric v \in W^{1,p}(\parametric\Omega)$ 
we obviously have 
\begin{align}
 \label{math:confvsdc}
 \inf_{ \parametric w_{h} \in \calP_{r,-1}(\calT) } 
 \left| \parametric v - \parametric w_{h} \right|_{W^{1,p}(\parametric\Omega)}
 &\leq 
 \inf_{ \parametric w_{h} \in \calP_{r, \rm D}(\calT) } 
 \left| \parametric v - \parametric w_{h} \right|_{W^{1,p}(\parametric\Omega)}
 .
\end{align}
Surprisingly, a converse inequality holds 
when assuming $\parametric v$ to satisfy a minor amount of additional global regularity. 
For every $\parametric v \in W^{1,p}(\parametric\Omega)$,
we find by Theorem~\ref{prop:veeser} that 
\begin{align*}
 \left| \parametric v - \Pi \parametric v \right|_{W^{1,p}(\parametric\Omega)}
 &
 \leq 
 (1+C_{\Pi}) 
 \sum_{ T \in \calT }
 \left| \parametric v - P_{T,r,1,p} \parametric v \right|_{W^{1,p}(T)}
 \\&
 \leq 
 (1+C_{\Pi}) 
 C^{\rm I}_{d,1,r,p,\mu}
 \sum_{ T \in \calT }
 \inf_{ w_{T} \in \calP_{r}(T) } 
 \left| \parametric v - w_{T} \right|_{W^{1,p}(T)}
 .
\end{align*}
The last inequality uses the best approximation property of the local interpolator. 
We now have obtained a converse to Inequality~\eqref{math:confvsdc}: 
for every $\parametric v \in W^{1,p}(\parametric\Omega)$
we have 
\begin{align}
 \label{math:dcvsconf}
 \begin{split}
  &
  \inf_{ \parametric w_{h} \in \calP_{r, \rm D}(\calT) } 
  \left| \parametric v - \parametric w_{h} \right|_{W^{1,p}(\parametric\Omega)}
  \\&\qquad\qquad
  \leq 
  (1+C_{\Pi})
  C^{\rm I}_{d,1,r,p,\mu}
  \inf_{ \parametric w_{h} \in \calP_{r,-1}(\calT) } 
  \left| \parametric v - \parametric w_{h} \right|_{W^{1,p}(\parametric\Omega)}
  .
 \end{split}
\end{align}

\begin{remark}
 The above error estimate seems to have appeared first in \cite{veeser2016approximating} 
 in the special case of the $H^{1}$-seminorm. 
 A generalization to general integer Sobolev regularity 
 and Lebesgue exponents was published later in \cite{camacho2014L2}. 
 The intended use in the later publication was similar to ours,
 namely to facilitate error estimates in the presence of variational crimes
 in surface finite element methods. 
 
 Notably, the broken Bramble-Hilbert lemma seems to close gaps 
 in earlier proofs of error estimates for isoparametric finite element methods. 
 For example, 
 the last inequality on p.1224 of \cite{bernardi1989optimal}
 is a Bramble-Hilbert-type estimate of a function on a triangle patch 
 with first-order Sobolev regularity but piecewise higher regularity;
 the proof in that reference requires a piecewise Bramble-Hilbert lemma which 
 was not in circulation at the point of publication. 
\end{remark}

\section{A Priori Error Estimates}
\label{sec:apriori}

In this section we attend to a thorough discussion of a priori error estimates for a finite element method 
of the parametric model problem and the critical role of the finite element approximation result in Theorem~\ref{prop:veeser}
in facilitating optimal convergence rates. 
We continue the discussion of a Galerkin method as in Section~\ref{sec:theory} 
with the concrete example of $\parametric V_{h} = \calP_{r,\rm D}(\calT)$ as a Galerkin space. 
\\

We consider a parametric right-hand side of the form 
\begin{align}
 \label{math:parametric:rhs}
 \parametric F( \parametric v )
 &= 
 \int_{\parametric\Omega} 
 \parametric f
 \parametric v
 \dif \parametric x
 +
 \int_{\parametric\Omega} 
 \parametric \bfg
 \cdot 
 \nabla \parametric v 
 \dif \parametric x,
 \quad 
 \parametric v \in W^{1,2}_{\rm D}(\parametric\Omega)
 . 
\end{align}
We let $\parametric u$ be the unique solution of 
\begin{gather*}
 \int_{\parametric\Omega}
 \nabla \parametric u
 \cdot 
 \parametric A
 \nabla \parametric v
 \dif \parametric x
 =
 \parametric F( \parametric v ),
 \quad 
 \parametric v \in W^{1,2}_{\rm D}(\parametric \Omega)
 .
\end{gather*}
We also let $\parametric u_{h}$ be the solution of the conforming Galerkin problem 
\begin{gather*}
 \int_{\parametric\Omega}
 \nabla \parametric u_{h}
 \cdot 
 \parametric A
 \nabla \parametric v_{h}
 \dif \parametric x
 =
 \parametric F( \parametric v_{h} ),
 \quad 
 \parametric v_{h} \in \calP_{r,\rm D}(\calT)
 .
\end{gather*}
In practice, the parametric coefficient and right-hand side can only be approximated.
Suppose that $\parametric A_{h} \in L^{\infty}(\parametric\Omega)^{n\times n}$ 
is an approximate parametric coefficient
and that we have an approximate right-hand side
\begin{align}
 \label{math:parametric:rhs:approx}
 \parametric F_{h}( \parametric v_{h} )
 &= 
 \int_{\parametric\Omega} 
 \parametric f_{h}
 \parametric v_{h}
 \dif \parametric x
 +
 \int_{\parametric\Omega} 
 \parametric \bfg_{h}
 \cdot 
 \nabla \parametric v_{h} 
 \dif \parametric x,
 \quad 
 \parametric v_{h} \in \parametric V_{h}
 ,
\end{align}
of a form analogous to the original right-hand side. 
We assume that there exists $\parametric c_{P,h} > 0$
satisfying the discrete coercivity estimate \eqref{math:coercivity:discrete}; see Remark~\ref{remark:discretecoercivity} below. 
Practically, we compute the solution $\underline{\parametric u}_{h}$
of the approximate Galerkin problem 
\begin{gather*}
 \int_{\parametric\Omega}
 \nabla \underline{\parametric u}_{h}
 \cdot 
 \parametric A_{h}
 \nabla \parametric v_{h}
 \dif \parametric x
 =
 \parametric F_{h}( \parametric v_{h} ),
 \quad 
 \parametric v_{h} \in \calP_{r,\rm D}(\calT)
 .
\end{gather*}
Our goal is estimate the error $\parametric u - \underline{\parametric u}_{h}$ in different norms.
We now recall some standard results in the Galerkin theory of elliptic problems. 
\\

The conforming Galerkin approximation is the optimal approximation in $\parametric V_{h}$
with respect to the $\parametric A$-weighted norms of the gradient:
\begin{gather}
 \label{math:apriori:galerkinseminorm}
 \| \nabla \parametric u - \nabla \parametric u_{h} \|_{L^{2}(\parametric\Omega,\parametric A)}
 = 
 \inf_{ \parametric v_{h} \in \parametric V_{h} }
 \| \nabla \parametric u - \nabla \parametric v_{h} \|_{L^{2}(\parametric\Omega,\parametric A)}
 .
\end{gather}
C\'ea's Lemma estimates the approximation in the full norm of $W^{1,2}(\parametric\Omega)$: 
\begin{gather}
 \label{math:apriori:cea}
 \sqrt{\parametric c_{P}}
 \| \parametric u - \parametric u_{h} \|_{W^{1,2}(\parametric\Omega)}
 \leq 
 \inf_{ \parametric v_{h} \in V_{h} }
 \| \nabla \parametric u - \nabla \parametric v_{h} \|_{L^{2}(\parametric\Omega,\parametric A)}
 .
\end{gather}
The error of the conforming Galerkin method in the $L^{2}$ norm is estimated by an Aubin-Nitsche-type argument,
which requires the theoretical discussion of an auxiliary problem. 
We let $\parametric z \in W^{1,2}_{\rm D}(\parametric\Omega)$ be the unique solution of 
\begin{gather}
 \label{math:dualproblem}
 \parametric B( \parametric v , \parametric z )
 = 
 \langle \parametric u - \parametric u_{h}, \parametric v \rangle_{L^{2}(\parametric\Omega)}, 
 \quad 
 \parametric v \in W^{1,2}_{\rm D}(\parametric\Omega). 
\end{gather}
We then find for $\parametric z_{h} \in \parametric V_{h}$ arbitrary that 
\begin{align*}
 \| \parametric u - \parametric u_{h} \|_{L^{2}(\parametric\Omega)}^{2}
 &= 
 \langle \parametric u - \parametric u_{h}, \parametric u - \parametric u_{h} \rangle_{L^{2}(\parametric \Omega)}
 =  
 \parametric B( \parametric u - \parametric u_{h}, \parametric z )
 =  
 \parametric B( \parametric u - \parametric u_{h}, \parametric z - \parametric z_{h} )
 ,
\end{align*}
and consequently 
\begin{gather}
 \label{math:aubinnitsche}
 \| \parametric u - \parametric u_{h} \|_{L^{2}(\parametric\Omega)}^{2}
 \leq   
 \| \nabla \parametric u - \nabla \parametric u_{h} \|_{L^{2}(\parametric \Omega,A)} 
 \| \nabla \parametric z - \nabla \parametric z_{h} \|_{L^{2}(\parametric \Omega,A)}
 . 
\end{gather}
Hence we expect the $L^{2}$ error of the conforming Galerkin method to converge generally faster 
than the error in the $W^{1,2}$ norm,
an intuition made rigorous whenever estimates for $\nabla \parametric z - \nabla \parametric z_{h}$ are available. 
\\

The corresponding error estimates for the non-conforming Galerkin approximation
reduce to estimates for $\parametric u_{h} - {\underline {\parametric u}_{h}}$, 
that is, we compare the conforming with the non-conforming approximation.
The triangle inequality in conjunction with \eqref{math:coercivity:parametric} gives 
\begin{align}
 \label{math:galerkincomparison:h1}
 \| \nabla \parametric u - \nabla {\underline {\parametric u}_{h}} \|_{L^{2}(\parametric\Omega)}
 &\leq  
 \| \nabla \parametric u - \nabla \parametric u_{h} \|_{L^{2}(\parametric\Omega)}
 + 
 \parametric c_{P}^{-\onehalf} 
 \| \nabla \parametric u_{h} - \nabla {\underline {\parametric u}_{h}} \|_{L^{2}(\parametric\Omega,\parametric A)}
 ,
 \\
 \label{math:galerkincomparison:l2}
 \| \parametric u - {\underline {\parametric u}_{h}} \|_{L^{2}(\parametric\Omega)}
 &\leq  
 \| \parametric u - \parametric u_{h} \|_{L^{2}(\parametric\Omega)}
 + 
 \parametric c_{P}^{-\onehalf} 
 \| \nabla \parametric u_{h} - \nabla {\underline {\parametric u}_{h}} \|_{L^{2}(\parametric\Omega,\parametric A)}
 . 
\end{align}
We use definitions to get 
\begin{align*}
 &
 \| \nabla \parametric u_{h} - \nabla {\underline {\parametric u}_{h}} \|_{L^{2}(\parametric\Omega,\parametric A)}^{2}
 \\&=
 \parametric B( \parametric u_{h} - {\underline {\parametric u}_{h}}, \parametric u_{h} - {\underline {\parametric u}_{h}} )
 \\&=
 \parametric F( \parametric u_{h} - {\underline {\parametric u}_{h}} )
 -
 \parametric B( {\underline {\parametric u}_{h}}, \parametric u_{h} - {\underline {\parametric u}_{h}} )
 \\&=
 \parametric F( \parametric u_{h} - {\underline {\parametric u}_{h}} )
 -
 \parametric F_{h}( \parametric u_{h} - {\underline {\parametric u}_{h}} )
 +
 \parametric B_{h}( {\underline {\parametric u}_{h}}, \parametric u_{h} - {\underline {\parametric u}_{h}} )
 -
 \parametric B( {\underline {\parametric u}_{h}}, \parametric u_{h} - {\underline {\parametric u}_{h}} )
 . 
\end{align*}
Consequently, we derive 
\begin{align}
 \label{math:testgalerkincomparison}
 \begin{split}
  &
  \| \parametric u_{h} - {\underline {\parametric u}_{h}} \|_{W^{1,2}(\parametric\Omega,\parametric A)}
  \\&\qquad\qquad
  \leq 
  \sup_{ \parametric w_{h} \in \calP_{r,\rm D}(\calT) } 
  \dfrac{ 
   ( \parametric F - \parametric F_{h} )( \parametric w_{h} )
   + 
   \parametric B_{h}( {\underline {\parametric u}_{h}}, \parametric w_{h} ) 
   -
   \parametric B( {\underline {\parametric u}_{h}}, \parametric w_{h} ) 
  }{ 
   \| \parametric w_{h} \|_{W^{1,2}(\parametric\Omega,\parametric A)} 
  }
  .
 \end{split}
\end{align}
Under natural regularity assumptions on the coefficients, 
\eqref{math:testgalerkincomparison} leads to optimal error estimates. 
In particular, in the special case that 
the conforming and non-conforming methods coincide,
\eqref{math:apriori:cea} is recovered.
\\

We need further tools to derive actual convergence rates from these abstract estimates. 
We recall a general polynomial approximation estimate
that follows from \cite[Theorem~3.1, Proposition~6.1]{dupont1980polynomial}
and a scaling argument. 

\begin{lemma}
 \label{prop:denylions}
 Let $T \in \calT$ be a $d$-simplex, $p \in [1,\infty]$, and $r \in \bbN_{0}$.
 Let $s \in \bbR$ with $r < s \leq r+1$.
 Then there exists $C^{\rm BH}_{d,\mu,s} > 0$
 that depends only on $d$, $s$, and $\mu(\calT)$ such that 
 for all $m \in \bbN_{0}$ with $m \leq r$ we get 
 \begin{gather}
  \label{math:bramblehilbert}
  \inf_{ \parametric v_{h} \in \calP_{r}(T) } 
  \left| \parametric v - \parametric v_{h} \right|_{W^{m,p}(T)}
  \leq 
  C^{\rm BH}_{d,\mu,s}
  h_{T}^{ s-m }
  \left| \parametric v \right|_{W^{s,p}(T)},
  \quad 
  \parametric v \in W^{s,p}(T)
  . 
 \end{gather}
\end{lemma}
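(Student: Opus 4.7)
My plan is to reduce to a reference simplex via an affine transformation, apply the polynomial approximation result of Dupont and Scott there, and then scale back to $T$, exploiting the shape-regularity assumption to absorb all dimensional factors into the constant $C^{\rm BH}_{d,\mu,s}$.

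Concretely, I would first fix a reference $d$-simplex $\hat T$ (say the standard one with vertices at the origin and the canonical basis vectors) and choose an affine isomorphism $F_T(\hat x) = B_T \hat x + b_T$ mapping $\hat T$ onto $T$. Shape regularity $\mu(T) \leq \mu(\calT)$ gives two-sided bounds $\|B_T\| \lesssim h_T$, $\|B_T^{-1}\| \lesssim h_T^{-1}$, and $|\det B_T| \eqsim h_T^d$, where the implicit constants depend only on $d$ and $\mu(\calT)$. For $\parametric v \in W^{s,p}(T)$, set $\hat v := \parametric v \circ F_T \in W^{s,p}(\hat T)$. A change-of-variables calculation — carried out separately for the integer part of $s$ using the chain rule, and for the fractional part $s - \lfloor s \rfloor$ by directly substituting in the Gagliardo double integral that defines the Slobodeckij seminorm — yields the two-sided scaling
\begin{gather*}
 |\hat v|_{W^{k,p}(\hat T)} \eqsim h_T^{k - d/p} |\parametric v|_{W^{k,p}(T)}
\end{gather*}
for every $k \in [0,s]$ of interest, with constants depending only on $d$ and $\mu(\calT)$.

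Next I apply the polynomial approximation theorem of Dupont and Scott \cite{dupont1980polynomial} on the fixed reference simplex: since $r < s \leq r+1$ and $m \leq r$, there exists $\hat q \in \calP_r(\hat T)$, depending linearly on $\hat v$, such that
\begin{gather*}
 |\hat v - \hat q|_{W^{m,p}(\hat T)} \leq \hat C_{d,s,m,p} |\hat v|_{W^{s,p}(\hat T)},
\end{gather*}
where $\hat C_{d,s,m,p}$ depends only on the reference simplex geometry. Pulling $\hat q$ back to $T$ via $q_T := \hat q \circ F_T^{-1} \in \calP_r(T)$ and invoking the scaling relation on both sides gives
\begin{gather*}
 h_T^{m - d/p} |\parametric v - q_T|_{W^{m,p}(T)}
 \lesssim
 h_T^{s - d/p} |\parametric v|_{W^{s,p}(T)},
\end{gather*}
which after rearranging is exactly \eqref{math:bramblehilbert}, with a constant $C^{\rm BH}_{d,\mu,s}$ that absorbs $\hat C_{d,s,m,p}$ together with the shape-regular scaling constants.

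The main obstacle I anticipate is verifying the scaling relation for the fractional-order Slobodeckij seminorm rather than the two easier ingredients (the affine reduction itself and the reference estimate from \cite{dupont1980polynomial}). The change of variables in the Gagliardo double integral produces a factor $|\det B_T|^2$ from $d\hat x\, d\hat y$ and the substitution $F_T(\hat x) - F_T(\hat y) = B_T(\hat x - \hat y)$ in the denominator contributes a factor controlled via $\|B_T\|$ and $\|B_T^{-1}\|$ to the power $d + sp$; one has to check carefully that shape regularity makes these equivalent up to constants depending only on $d$ and $\mu(\calT)$, and that the case $p = \infty$ (where the Gagliardo seminorm is an essential supremum) is handled analogously. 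Once this scaling is in hand, the remainder is bookkeeping.
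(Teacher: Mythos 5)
Your proposal is correct and follows essentially the same route the paper indicates: the paper states this lemma without a written proof, citing Theorem~3.1 and Proposition~6.1 of Dupont--Scott together with ``a scaling argument,'' and your affine reduction to a reference simplex with the shape-measure bounds on $B_T$, $B_T^{-1}$, $\det B_T$ and the Gagliardo-seminorm scaling is exactly that standard scaling argument spelled out. Your flagged caveats (fractional seminorm scaling and the $p=\infty$ case) are the right details to check, and they go through as you describe.
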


We use the polynomial approximation result to derive quantitative error estimates 
in terms of the mesh size. 
We write ${\underline {\physical u}_{h}} := \Phi^{-\ast} {\underline {\parametric u}_{h}}$
for the transformation of the non-conforming Galerkin solution onto the physical domain. 
It is then easily verified that 
\begin{align*}
 \| \physical u - {\underline {\physical u}_{h}} \|_{L^{2}(\physical\Omega}
 &\leq 
 \| \det \Dif \Phi^{-1} \|_{L^{\infty}(\parametric\Omega)}
 \| \parametric u - {\underline {\parametric u}_{h}} \|_{L^{2}(\parametric\Omega)}
 ,
 \\ 
 \| \nabla \physical u - \nabla {\underline {\physical u}_{h}} \|_{L^{2}(\physical\Omega}
 &\leq 
 \| \det \Dif \Phi^{-1} \|_{L^{\infty}(\parametric\Omega)}
 \| \Dif \Phi^{-1} \|_{L^{\infty}(\parametric\Omega)}
 \| \nabla \parametric u - \nabla {\underline {\parametric u}_{h}} \|_{L^{2}(\parametric\Omega)}
 .
\end{align*}
Due to \eqref{math:galerkincomparison:h1} and \eqref{math:galerkincomparison:l2},
it remains to analyze the error $\parametric u - \parametric u_{h}$
of the conforming Galerkin method (over the parametric domain),
and the non-conformity error terms in \eqref{math:testgalerkincomparison}.
\\

The physical setting captures the relevant regularity features of the problem.
Local regularity features of the physical solution translate to local regularity features 
of the parametric solution because the transformation $\Phi$ is piecewise smooth by assumption. 
Given $s \in \bbR^{+}_{0}$, there exists $C^{\Phi}_{n,s} > 0$,
bounded in terms of $n$, $s$, 
$\|\Phi\|_{W^{l+1,\infty}(\parametric\Omega)}$ 
and $\|\Phi^{-1}\|_{W^{l+1,\infty}(\physical\Omega)}$,
such that whenever we have $\physical u \in W^{s,2}(\physical\Omega)$, 
we get on every $n$-simplex $T \in \calT$
\begin{gather*}
 | \parametric u |_{W^{s,2}(T)}
 \leq 
 C^{\Phi}_{n,s}
 \| \physical u \|_{W^{s,2}(\Phi(T))}. 
\end{gather*}
So suppose that $\physical u \in W^{s,2}(\Phi(T))$ for some $s \geq 1$, 
and let $r \in \bbN$ the largest integer with $r < s$. 
In conjunction with the Galerkin optimality, Theorem~\ref{prop:veeser}, and Lemma~\ref{prop:denylions},
we then obtain the estimate 
\begin{align*}
 | \parametric u - \parametric u_{h} |_{W^{1,2}(\parametric\Omega)} 
 &=  
 \inf_{ \parametric v_{h} \in \calP_{r,\rm D}(\calT) }
 | \parametric u - \parametric v_{h} |_{W^{1,2}(\parametric\Omega)} 
 \\&=
 (1+C_{\Pi})
 \sum_{ T \in \calT }
 \left| \parametric v - P_{T,r,1} \parametric v \right|_{W^{1,2}(T)}
 \\&\leq 
 (1+C_{\Pi})
 C^{\rm I}_{d,1,r,2,\mu}
 C^{\rm BH}_{d,\mu,s} 
 \sum_{ T \in \calT }
 h_{T}^{s-1}
 | \parametric u |_{W^{s,2}(T)}
 .
\end{align*}
Recall the definition of $\parametric z$
as the solution of the parametric model problem with right-hand side $\parametric u - \parametric u_{h}$.
It then follows that $\physical z := \Phi^{-\ast} \parametric z$
is the solution of the physical model problem with right-hand side 
$\Phi^{-\ast}( \parametric u - \parametric u_{h} )$. 
Consequently, 
whenever $\physical z \in W^{t,2}(\physical\Omega)$
for some $t \in \bbR^{+}_{0}$,
then for all $n$-simplices $T \in \calT$
it follows that 
$| \parametric u |_{W^{t,2}(T)} \leq  C^{\Phi}_{n,t} \| \physical u \|_{W^{t,2}(\Phi(T))}$. 
Similar to the arguments used above, we then see 
\begin{align}
 | \parametric z - \parametric z_{h} |_{W^{1,2}(\parametric\Omega)} 
 \leq 
 (1+C_{\Pi})
 C^{\rm I}_{d,1,r,\mu}
 C^{\rm BH}_{d,\mu,t} 
 \sum_{ T \in \calT }
 h_{T}^{t-1}
 | \parametric z |_{W^{t,2}(T)}
 . 
\end{align}
This provides convergence rates for the conforming Galerkin method. 
\\

For error estimates for the non-conforming problem, 
it remains to analyze the difference 
$\nabla \parametric u_{h} - \nabla {\underline {\parametric u}_{h}}$
using \eqref{math:testgalerkincomparison}.
Via \eqref{math:coercivity:parametric} and \eqref{math:discretestability:nonconforming}
we get 
\begin{align*}
 \begin{split}
  &
  \| \parametric u_{h} - {\underline {\parametric u}_{h}} \|_{W^{1,2}(\parametric\Omega,\parametric A)}
  \\&
  \qquad
  \leq 
  \frac{\parametric c_{P,h}}{\parametric c_{P}}
  \left( 
   \| \parametric F - \parametric F_{h} \|_{W^{-1,2}_{\rm N}(\parametric\Omega)}
   + 
   \| \parametric A - \parametric A_{h} \|_{L^{\infty}(\parametric\Omega)}
   | {\underline {\parametric u}_{h}} |_{W^{1,2}(\parametric\Omega)}
  \right)
  .
 \end{split}
\end{align*}
Note that
\begin{align*}
 \parametric c_{P,h}
 | {\underline {\parametric u}_{h}} |_{W^{1,2}(\parametric\Omega)}
 \leq 
 \| \parametric F_{h} \|_{W^{-1,2}_{\rm N}(\parametric\Omega)}
 \leq 
 \| \parametric f_{h} \|_{L^{2}(\parametric\Omega)}
 +
 \| \parametric \bfg_{h} \|_{L^{2}(\parametric\Omega)}
\end{align*}
As for the approximate right-hand side $F_{h}$ as in \eqref{math:parametric:rhs:approx}, 
we get 
\begin{align*}
 \|
  \parametric F_{ } 
  -
  \parametric F_{h} 
 \|_{W^{-1,2}_{\rm N}(\parametric\Omega)}
 &\leq 
 \| \parametric f - \parametric f_{h} \|_{L^{2}(\parametric\Omega)}
 +
 \| \parametric \bfg - \parametric \bfg_{h} \|_{L^{2}(\parametric\Omega)}
 .
\end{align*}
Consequently, it remains to bound the errors of the approximate right-hand side 
and the approximate coefficient. 
Let us make the regularity assumptions
\begin{gather*}
 \physical A \in W^{l,\infty}(\physical\Omega),
 \quad 
 \physical f \in W^{l,2}(\physical\Omega),
 \quad 
 \physical \bfg \in W^{l,2}(\physical\Omega).
\end{gather*}
For each $n$-simplex $T \in \calT$ it then follows that 
\begin{align}
 | \parametric f |_{W^{l,2}(T)}
 &\leq 
 C^{\Phi}_{n,l} 
 \| \physical f \|_{W^{l,2}(\Phi(T))}
 ,
 \\ 
 | \parametric \bfg |_{W^{l,2}(T)}
 &\leq   
 C^{\Phi}_{n,l} 
 \| \physical \bfg \|_{W^{l,2}(\Phi(T))}
 ,
 \\
 | \parametric A |_{W^{l,\infty}(T)}
 &\leq   
 C^{\Phi}_{n,l} 
 \| \physical A \|_{W^{l,\infty}(\Phi(T))}^{}
 .
\end{align}
In other words, the parametric data and coefficients inherit 
the piecewise regularity of their physical counterparts. 
Let $k \in \bbN_{0}$ be the largest integer with $k < l$; 
we use $k$ as the polynomial degree of the data approximation. 
Suppose that we specifically choose the approximate parametric coefficient $\parametric A_{h}$ 
in each component as the piecewise best $L^{2}$ approximation of $\parametric A$ 
by polynomials of degree at most $k$.
Similarly, suppose we choose $\parametric f_{h}$ and $\parametric \bfg_{h}$ 
as piecewise polynomial best $L^{2}$ approximations of degree at most $k$. 
Then Lemma~\ref{prop:denylions} yields 
\begin{align*}
 \| \parametric A - \parametric A_{h} \|_{L^{\infty}(T)}
 &\leq 
 C^{\rm BH}_{d,\mu,l}
 h_{T}^{l} 
 | \parametric A |_{W^{l,\infty}(T)}
 ,
 \\
 \| \parametric f - \parametric f_{h} \|_{L^{2}(T)}
 &\leq 
 C^{\rm BH}_{d,\mu,l}
 h_{T}^{l} 
 | \parametric f |_{W^{l,2}(T)}
 ,
 \\ 
 \| \parametric \bfg - \parametric \bfg_{h} \|_{L^{2}(T)}
 &\leq   
 C^{\rm BH}_{d,\mu,l}
 h_{T}^{l} 
 | \parametric \bfg |_{W^{l,2}(T)}
\end{align*}
for each $n$-simplex $T \in \calT$. 
This completes the desired estimates.

\begin{remark}
 We compare our approach to a classical finite element method for curved domains, namely isoparametric finite element methods.
 The latter assume an affine mesh $\calT_{h}$ of a polyhedral parametric domain $\parametric\Omega_{h}$,
 and a piecewise polynomial coordinate transformation $\Phi_{h}$
 whose image is a curved polyhedral domain that ``approximates'' (in whatever sense) the physical domain. 
 The finite element method over the image of $\Phi$ can be pulled back to a finite element method 
 over the polyhedral parametric domain $\parametric\Omega_{h}$. 
 Since each $\Phi_{h}$ is piecewise polynomial,
 the pullback introduces only polynomial terms that can be evaluated exactly. 
 We may thus interpret the approximate isoparametric meshes 
 as a tool to develop approximate coefficients on the underlying affine mesh. 
 
 Isoparametric finite element methods use an approximate coordinate transformation 
 whose coefficients can be determined exactly.
 Our approach in this article is strictly different 
 in that we use an exact coordinate transformation whose coefficients are then approximated. 
 It seems that both procedures lead to similar results in practice. 
 
 One of the first error estimates for the isoparametric finite element method
 is due to Ciarlet and Raviart \cite{ciarlet1972interpolation}, 
 who transferred the canonical interpolant to the isoparametric setting. 
 The idea is to take the nodal values of the canonical interpolant within the physical domain. 
 This idea can be replicated in our setting without essential difficulty. 
 However, a severe restriction is the requirement of higher regularity for the solution. 
 A more refined error analysis for the case of conforming geometry 
 was established by Clement \cite{clement1975approximation}, Scott and Zhang \cite{scott1990finite},
 and recently by Ern and Guermond \cite{ern2017finite}
 Lenoir's contribution \cite{lenoir1986optimal} focuses on the construction of curved triangulations 
 for a given domain and gives error estimates again only for the case that 
 the physical solution is continuous (see Lemma~7 in the reference).
 
 To our best knowledge, however, 
 it is only with Inequality~\eqref{math:dcvsconf} that rigorous a priori error estimates 
 for higher order isoparametric FEM are available.
 Furthermore, our method is more flexible whenever the (possibly non-polynomial)
 coordinate transformation is actually known. 
\end{remark}

\begin{remark}
 \label{remark:discretecoercivity}
 The discrete coercivity condition \eqref{math:coercivity:discrete} 
 holds for sufficiently fine approximation of the parametric coefficient. 
 The feasibility of quasi-optimal positivity preserving interpolation is not studied in this article;
 we refer to the literature for results on positivity preserving interpolation of functions
 \cite{nochetto2002positivity}. 
\end{remark}

\begin{remark}
 Let $r \in \bbN$ and $k \in \bbN_{0}$. 
 Suppose that $\physical u \in W^{r+1,2}(\physical\Omega)$
 and that $\physical A$, $\physical f$, and $\physical \bfg$
 have regularity $W^{k+1,2}(\physical\Omega)$. 
 Let us also assume that full elliptic regularity holds for the physical model problem,
 so that $\physical z \in W^{2,2}(\physical\Omega)$.
 Letting $C > 0$ denote a generic constant
 and letting $h > 0$ denote the maximum diameter of a simplex in the triangulation,
 we have 
 \begin{gather*}
  | \physical u - \underline{\physical u}_{h} |_{W^{1,2}(\physical\Omega)}
  \leq 
  C
  h^{r}
  \| \physical u \|_{W^{r+1,2}(\physical\Omega)}
  + 
  C
  h^{k+1}
  \| \physical u \|_{W^{1,2}(\physical\Omega)},
  \\
  \| \physical u - \underline{\physical u}_{h} \|_{L^{2}(\physical\Omega)}
  \leq 
  C
  h^{r+1}
  \| \physical u \|_{W^{r+1,2}(\physical\Omega)}
  + 
  C
  h^{k+1}
  \| \physical u \|_{W^{1,2}(\physical\Omega)}
  . 
 \end{gather*}
 Analogous estimates are known for surface finite element methods,
 where $k$ signifies the degree of geometric approximation. 
 Our a priori error estimate is the sum of a classical Galerkin approximation error
 and additional error terms due to coefficient and data approximation. 
 This is reminiscent 
 of what has been called ``almost best-approximation error'' 
 and ``geometric error'' in the analysis of surface finite element methods 
 \cite[p.2]{demlow2009higher}. 
\end{remark}

\section{Examples and Computational Experiments}
\label{sec:examples}

This articles finishes with a few examples of coordinate transformations 
from polyhedral parametric domains onto curved physical domains 
and illustrative numerical computations within those geometric setups.

We have conducted our numerical experiments with the \texttt{FEniCS} Library. 
We list the observed errors for polynomial order $1 \leq r \leq 4$
and several levels of uniform refinement. 
The polynomial degree of the interpolated coefficients and data has always been 
equal to
the degree of the finite element space. 
All linear system of equations have been solved with the conjugate gradient method
and \texttt{FEniCS}'s built-in \texttt{amg} preconditioner,
the absolute and relative error tolerance set to \texttt{1e-20} each.
The theoretically predicted convergence rates are achieved in practice.

\subsection{Anulus}
Models in geophysics and climate science 
assume static homogeneous conditions over a large interior part of the Earth
and pose partial differential equations only over a thin outer volume of the planet.
The equations reign over an $n$-dimensional anulus,
that is, an $n$-ball with an internal $n$-ball removed. 

We introduce the parametric anulus $\parametric \calA$
and the physical anulus $\physical \calA$ by 
\begin{gather*}
 \parametric \calA := \left\{ \parametric x \in \bbR^{n} \;\middle|\; \frac{1}{2} < \|\parametric x\|_{1} < 1 \right\},
 \quad
 \physical \calA := \left\{ \physical x \in \bbR^{n} \;\middle|\; \frac{1}{2} < \|\physical x\|_{2} < 1 \right\}.
\end{gather*}
We consider a coordinate transformations
from the parametric anulus onto the physical anulus, 
\begin{gather*} 
 \Phi : \parametric \calA \rightarrow \physical \calA,
 \quad 
 \parametric x
 = 
 \|\parametric x\|_{1} \|\parametric x\|_{2}^{-1}
 \parametric x,
\end{gather*}
which is easily seen to be invertible and bi-Lipschitz. 
Furthermore, 
over the intersections of $\parametric \calA$ 
with any of the $2^{n}$ coordinate quadrants, 
the transformation $\Phi$ is a diffeomorphism with derivatives of all orders pointwise bounded.
In particular, 
it is easy to find a (coarse) initial triangulation of $\parametric\calA$ 
such that the coordinate transformation $\Phi$ is piecewise smooth. 

This construction allows us to transport partial differential equations 
over an \emph{Euclidean anulus} $\physical \calA$
to partial differential equations over the polyhedral \emph{Manhattan-metric anulus} $\parametric \calA$.
Suppose that we want to solve the Poisson problem 
\begin{gather}
 \label{math:physical_example_problem}
 \int_{\physical \calA} \nabla \physical u \cdot \nabla \physical v \dif \physical x
 = 
 \int_{\physical \calA} \physical f \cdot \physical v \dif \physical x 
 + 
 \int_{\physical \calA} \physical \bfg \cdot \nabla \physical v \dif \physical x,
 \quad 
 \physical v \in H^{1}_{0}(\physical \calA),
\end{gather}
over the Euclidean anulus. Along the transformation $\Phi$
we can translate this into an equivalent Poisson problem 
over the parametric anulus $\parametric \calA$ of the form 
\begin{align}
 \label{math:parametric_example_problem}
 \begin{split}
  &
  \int_{\parametric \calA} 
  \left| \det \Dif \Phi \right| 
  \nabla \parametric u \cdot \Dif\Phi^{-1}_{|\Phi}\Dif\Phi^{-t}_{|\Phi} \nabla \parametric v \dif \parametric x
  \\&\quad= 
  \int_{\parametric \calA} 
  \left| \det \Dif \Phi \right| 
  (\physical f \circ \Phi) \parametric v \dif \parametric x 
  + 
  \int_{\parametric\Omega} 
  \left| \det \Dif \Phi \right| 
  \left( \Dif\Phi^{-1}_{|\Phi} \left( \physical\bfg \circ \Phi \right) \right) \nabla \parametric v \dif \parametric x
 \end{split}
\end{align}
for any test function $\parametric v \in H^{1}_{0}(\parametric \calA)$.
Thus the parametric problem can be solved with textbook methods. 
However, it is a stronger result and a consequence of Theorem~\ref{prop:veeser} 
that the piecewise regularity of the parametric solution $\parametric u$,
which is inherited from the physical solution $\physical u$ on each cell,
leads to the same convergence rates that the corresponding global regularity of $\physical u$ would suggest. 

\begin{figure}[t]
 \centering 
 \begin{tikzpicture}
    [line join=bevel,x={( 1.5cm, 0mm)},y={( 0mm, 1.5cm)},z={( 1.5*3.85mm, -1.5*3.85mm)}]
    
    \coordinate (O)   at (  0.0,  0.0, 0.0);
    
    \filldraw[fill=black!15!white, draw=black] (O) circle (1.5cm);
    \filldraw[fill=white, draw=black] (O) circle (0.75cm);

  \end{tikzpicture}  \begin{tikzpicture}
    [line join=bevel,x={( 1.5cm, 0mm)},y={( 0mm, 1.5cm)},z={( 1.5*3.85mm, -1.5*3.85mm)}]
    
    \coordinate (O)   at (  0.0,  0.0, 0.0);
    
    \coordinate (EEo) at (  1.0,  0.0, 0.0);
    \coordinate (NNo) at (  0.0,  1.0, 0.0);
    \coordinate (WWo) at ( -1.0,  0.0, 0.0);
    \coordinate (SSo) at (  0.0, -1.0, 0.0);
    
    \coordinate (EEi) at ($(O)!0.5!(EEo)$);
    \coordinate (NNi) at ($(O)!0.5!(NNo)$);
    \coordinate (WWi) at ($(O)!0.5!(WWo)$);
    \coordinate (SSi) at ($(O)!0.5!(SSo)$);
    
    \coordinate (NEo) at ($(NNo)!0.5!(EEo)$);
    \coordinate (NWo) at ($(NNo)!0.5!(WWo)$);
    \coordinate (SWo) at ($(SSo)!0.5!(WWo)$);
    \coordinate (SEo) at ($(SSo)!0.5!(EEo)$);
    
    \coordinate (NEi) at ($(O)!0.5!(NEo)$);
    \coordinate (NWi) at ($(O)!0.5!(NWo)$);
    \coordinate (SWi) at ($(O)!0.5!(SWo)$);
    \coordinate (SEi) at ($(O)!0.5!(SEo)$);
    
    \filldraw[fill=black!15!white, draw=black] (EEo) -- (EEi) -- (NNi) -- cycle;
    \filldraw[fill=black!15!white, draw=black] (EEo) -- (NNi) -- (NNo) -- cycle;
    
    \filldraw[fill=black!15!white, draw=black] (NNo) -- (NNi) -- (WWi) -- cycle;
    \filldraw[fill=black!15!white, draw=black] (NNo) -- (WWi) -- (WWo) -- cycle;
    
    \filldraw[fill=black!15!white, draw=black] (WWo) -- (WWi) -- (SSi) -- cycle;
    \filldraw[fill=black!15!white, draw=black] (WWo) -- (SSi) -- (SSo) -- cycle;
    
    \filldraw[fill=black!15!white, draw=black] (SSo) -- (SSi) -- (EEi) -- cycle;
    \filldraw[fill=black!15!white, draw=black] (SSo) -- (EEi) -- (EEo) -- cycle;

  \end{tikzpicture}  \begin{tikzpicture}
    [line join=bevel,x={( 1.5cm, 0mm)},y={( 0mm, 1.5cm)},z={( 1.5*3.85mm, -1.5*3.85mm)}]
    
    \coordinate (O)   at (  0.0,  0.0, 0.0);
    
    \coordinate (EEo) at (  1.0,  0.0, 0.0);
    \coordinate (NNo) at (  0.0,  1.0, 0.0);
    \coordinate (WWo) at ( -1.0,  0.0, 0.0);
    \coordinate (SSo) at (  0.0, -1.0, 0.0);
    
    \coordinate (EEi) at ($(O)!0.5!(EEo)$);
    \coordinate (NNi) at ($(O)!0.5!(NNo)$);
    \coordinate (WWi) at ($(O)!0.5!(WWo)$);
    \coordinate (SSi) at ($(O)!0.5!(SSo)$);
    
    \coordinate (NEo) at ($(NNo)!0.5!(EEo)$);
    \coordinate (NWo) at ($(NNo)!0.5!(WWo)$);
    \coordinate (SWo) at ($(SSo)!0.5!(WWo)$);
    \coordinate (SEo) at ($(SSo)!0.5!(EEo)$);
    
    \coordinate (NEi) at ($(O)!0.5!(NEo)$);
    \coordinate (NWi) at ($(O)!0.5!(NWo)$);
    \coordinate (SWi) at ($(O)!0.5!(SWo)$);
    \coordinate (SEi) at ($(O)!0.5!(SEo)$);
    
    \filldraw[fill=black!15!white, draw=black] (EEo) -- (EEi) -- (NEo) -- cycle;
    \filldraw[fill=black!15!white, draw=black] (EEi) -- (NNi) -- (NEo) -- cycle;
    \filldraw[fill=black!15!white, draw=black] (NNo) -- (NNi) -- (NEo) -- cycle;
    
    \filldraw[fill=black!15!white, draw=black] (WWo) -- (WWi) -- (NWo) -- cycle;
    \filldraw[fill=black!15!white, draw=black] (WWi) -- (NNi) -- (NWo) -- cycle;
    \filldraw[fill=black!15!white, draw=black] (NNo) -- (NNi) -- (NWo) -- cycle;
    
    \filldraw[fill=black!15!white, draw=black] (WWo) -- (WWi) -- (SWo) -- cycle;
    \filldraw[fill=black!15!white, draw=black] (SSi) -- (WWi) -- (SWo) -- cycle;
    \filldraw[fill=black!15!white, draw=black] (SSo) -- (SSi) -- (SWo) -- cycle;
    
    \filldraw[fill=black!15!white, draw=black] (EEo) -- (EEi) -- (SEo) -- cycle;
    \filldraw[fill=black!15!white, draw=black] (EEi) -- (SSi) -- (SEo) -- cycle;
    \filldraw[fill=black!15!white, draw=black] (SSo) -- (SSi) -- (SEo) -- cycle;

  \end{tikzpicture}  \caption{
 From left to right: 
 physical anulus $\physical \calA$, 
 and parametric anulus $\parametric \calA$ with two possible triangulations 
 for which $\Phi$ is a piecewise diffeomorphism. 
 Our computations use the first triangulation. 
 }
 \label{fig:triangulation:A}
\end{figure}

\begin{remark}
 For illustration, consider the radially symmetric Dirichlet problem 
 \begin{align}
  - \Delta \physical u = 1, \quad \physical u_{|\partial\physical \calA} = 0, 
 \end{align}
 over the physical anulus $\physical A$ with $n=2$. 
 The solution is the function 
 \begin{align}
  \physical u(x,y) 
  = 
  \frac{1}{4}
  +
  \frac{3\ln(x^{2}+y^{2})}{32 \ln(2)}
  -
  \frac{x^{2}+y^{2}}{4}
  .
 \end{align}
 The results of the computational experiments (for the corresponding parametric) problems 
 are summarized in Table~\ref{table:anulus_problem_1}.
 We have used the first triangulation in Figure~\ref{fig:triangulation:A}
 The expected convergence behavior is clearly visible for all polynomial orders. 
\end{remark}

\subsection{Quadrant of Unit Ball}
Our second example geometry considers the positive quadrant of the Euclidean unit ball. 
We transport differential equations over that domain 
to differential equations over the positive quadrant of the Manhattan unit ball.
The homeomorphism is the identity near the origin. 
We concretely define
\begin{gather*}
 \parametric \calB := \left\{ \parametric x \in (\bbR_{0}^{+})^{n} \;\middle|\; \|\parametric x\|_{1} < 1 \right\},
 \quad
 \physical \calB := \left\{ \physical x \in (\bbR_{0}^{+})^{n} \;\middle|\; \|\physical x\|_{2} < 1 \right\}.
\end{gather*}
Consider the transformation 
\begin{gather*}
 \Psi_{}^{} : \parametric \calB \rightarrow \physical \calB,
 \quad 
 \parametric x \mapsto \left\{\begin{array}{ll}
            \parametric x 
            & \text{ if } \|\parametric x\|_1 \leq \onehalf,
            \\
            \left( 
              \|\parametric x\|_{1}^{-1}
              -
              \|\parametric x\|_{2}^{-1}
              +
              2\frac{\|\parametric x\|_{1}}{\|\parametric x\|_{2}}
              -
              1
            \right)
            \parametric x
            & \text{ if } \onehalf < \|\parametric x\|_1 \leq 1.
           \end{array}
 \right.
\end{gather*}
This mapping is the identity over the set of points with Manhattan distance at most $\onehalf$ from the origin.
It is easily verified that both $\Psi$ and $\Psi^{-1}$ are bi-Lipschitz.
If a triangulation of $\parametric \calB$ accommodates the case distinction in the definition,
then both $\Psi$ and $\Psi^{-1}$ have bounded derivatives of all orders over each cell.

\begin{figure}[t]
 \centering 
 \begin{tikzpicture}
    [line join=bevel,x={( 1.5cm, 0mm)},y={( 0mm, 1.5cm)},z={( 1.5*3.85mm, -1.5*3.85mm)}]
    
    \filldraw[fill=black!15!white, draw=black]
    (0,0) -- (1.5cm,0mm) arc (0:90:1.5cm) -- (0,0);

  \end{tikzpicture}  \begin{tikzpicture}
    [line join=bevel,x={( 1.5cm, 0mm)},y={( 0mm, 1.5cm)},z={( 1.5*3.85mm, -1.5*3.85mm)}]
    
    \coordinate (P0) at (  0.0,  0.0, 0.0);
    
    \coordinate (P1) at (  0.5,  0.0, 0.0);
    \coordinate (P2) at (  0.0,  0.5, 0.0);
    \coordinate (P3) at (  1.0,  0.0, 0.0);
    \coordinate (P4) at (  0.0,  1.0, 0.0);
    
    \filldraw[fill=black!15!white, draw=black] (P0) -- (P1) -- (P2) -- cycle;
    \filldraw[fill=black!15!white, draw=black] (P1) -- (P2) -- (P3) -- cycle;
    \filldraw[fill=black!15!white, draw=black] (P2) -- (P3) -- (P4) -- cycle;
    
  \end{tikzpicture}  \begin{tikzpicture}
    [line join=bevel,x={( 1.5cm, 0mm)},y={( 0mm, 1.5cm)},z={( 1.5*3.85mm, -1.5*3.85mm)}]
    
    \coordinate (P0) at (  0.0,  0.0, 0.0);
    \coordinate (P1) at (  0.5,  0.0, 0.0);
    \coordinate (P2) at (  0.0,  0.5, 0.0);
    \coordinate (P3) at (  1.0,  0.0, 0.0);
    \coordinate (P4) at (  0.0,  1.0, 0.0);
    \coordinate (P5) at (  0.5,  0.5, 0.0);
    
    \filldraw[fill=black!15!white, draw=black] (P0) -- (P1) -- (P2) -- cycle;
    \filldraw[fill=black!15!white, draw=black] (P1) -- (P2) -- (P5) -- cycle;
    \filldraw[fill=black!15!white, draw=black] (P1) -- (P3) -- (P5) -- cycle;
    \filldraw[fill=black!15!white, draw=black] (P2) -- (P4) -- (P5) -- cycle;
    
  \end{tikzpicture}  \caption{
 From left to right: 
 physical domain $\physical \calB$, 
 and parametric domain $\parametric \calB$ with two possible triangulations 
 for which $\Psi$ is a piecewise diffeomorphism. 
 Our computations use the first triangulation. 
 }
 \label{fig:triangulation:B}
\end{figure}

\begin{remark}
 For the purpose of demonstration,
 we let $n=2$ and solve the Poisson problem over $\physical \calB$ 
 with homogeneous Dirichlet boundary conditions: 
 \begin{align}
  -\Delta \physical u
  = 
  24 x^{2} y^{2} - 2 ( x^{2} + y^{2} ) + 2( x^{4} + y^{4} ),
  \quad 
  \physical u_{|\partial\physical \calB} = 0 
  .
 \end{align}
 The solution is the polynomial 
 \begin{align}
  \physical u 
  =
  x^{2} y^{2}( 1 - x^{2} - y^{2} ) 
  .
 \end{align} 
 Though the function $\physical u$ is even a polynomial, 
 its parametric counterpart $\parametric u$ is not. 
 The results of the computational experiments (for the corresponding parametric problem) 
 are summarized in Table~\ref{table:ball_problem_1}.
 We have used the first triangulation in Figure~\ref{fig:triangulation:B}
 The theoretically predicted convergence behavior emerges.
\end{remark}

\begin{table}[t]
  \captionof{table}{Convergence table for example problem over the anulus.}
  \label{table:anulus_problem_1}
  \centering\footnotesize
  
  \subfloat[$H^{1}$ seminorm of error $e$ and convergence rate $\rho$]{
    \begin{tabular} {r r l r l r l r l}
      \toprule
      & \multicolumn{2}{c}{$r=1$}
      & \multicolumn{2}{c}{$r=2$}
      & \multicolumn{2}{c}{$r=3$}
      & \multicolumn{2}{c}{$r=4$}
      \\
      \cmidrule(lr){2-3}
      \cmidrule(lr){4-5}
      \cmidrule(lr){6-7}
      \cmidrule(lr){8-9}
      $L$ & $|e|_{1}$ & $\rho$ & $|e|_{1}$ & $\rho$ & $|e|_{1}$ & $\rho$ & $|e|_{1}$ & $\rho$
      \\[0.5em]
        0	& 0.25098	& --	& 0.022662	& --	& 0.0092381	& --	& 0.0044456	& --
        \\
        1	& 0.17023	& 0.56	& 0.011661	& 0.95	& 0.0019261	& 2.26	& 0.00038776	& 3.51
        \\
        2	& 0.12360	& 0.46	& 0.0059485	& 0.97	& 6.1275e-04	& 1.65	& 7.0433e-05	& 2.46
        \\
        3	& 0.06808	& 0.86	& 0.0016833	& 1.82	& 8.9297e-05	& 2.77	& 5.5085e-06	& 3.67
        \\
        4	& 0.035349	& 0.94	& 4.3925e-04	& 1.93	& 1.1607e-05	& 2.94	& 3.7089e-07	& 3.89
        \\
        5	& 0.017975	& 0.97	& 1.1169e-04	& 1.97	& 1.4614e-06	& 2.98	& 2.3773e-08	& 3.96
        \\
        6	& 0.0090598	& 0.98	& 2.8134e-05	& 1.98	& 1.8270e-07	& 2.99	& 1.5005e-09	& 3.98
        \\
      \bottomrule
    \end{tabular}
  }

  \subfloat[Convergence in $L^{2}$ norm and convergence rate $\rho$]{
    \begin{tabular} {r r l r l r l r l}
      \toprule
      & \multicolumn{2}{c}{$r=1$}
      & \multicolumn{2}{c}{$r=2$}
      & \multicolumn{2}{c}{$r=3$}
      & \multicolumn{2}{c}{$r=4$}
      \\
      \cmidrule(lr){2-3}
      \cmidrule(lr){4-5}
      \cmidrule(lr){6-7}
      \cmidrule(lr){8-9}
      $L$ & $\|e\|_{2}$ & $\rho$ & $\|e\|_{2}$ & $\rho$ & $\|e\|_{2}$ & $\rho$ & $\|e\|_{2}$ & $\rho$
      \\[0.5em]
        0	& 0.028212	& --	& 0.0014496	& --  	& 6.6475e-04	& --   	& 2.3259e-04	& --
        \\
        1	& 0.015748	& 0.84	& 6.4554e-04	& 1.16	& 9.1570e-05	& 2.85	& 1.5631e-05	& 3.89
        \\
        2	& 0.0089178	& 0.82	& 2.0229e-04	& 1.67	& 1.4589e-05	& 2.64	& 1.4000e-06	& 3.48
        \\
        3	& 0.0026699	& 1.73	& 2.5682e-05	& 2.97	& 9.7970e-07	& 3.89	& 5.1223e-08	& 4.77
        \\
        4	& 7.0914e-04	& 1.91	& 3.1727e-06	& 3.01	& 5.9079e-08	& 4.05	& 1.7106e-09	& 4.90
        \\
        5	& 1.8158e-04	& 1.96	& 3.9619e-07	& 3.00	& 3.5422e-09	& 4.05	& 5.4605e-11	& 4.96
        \\
        6	& 4.5877e-05	& 1.98	& 4.9728e-08	& 2.99	& 2.1552e-10	& 4.03	& 1.7219e-12	& 4.98
        \\
      \\
      \bottomrule
    \end{tabular}
  }
\end{table} 

\begin{table}[t]
  \captionof{table}{Convergence table for example problem over the positive ball quadrant.}
  \label{table:ball_problem_1}
  \centering\footnotesize
  
  \subfloat[$H^{1}$ seminorm of error $e$ and convergence rate $\rho$]{
    \begin{tabular} {r r l r l r l r l}
      \toprule
      & \multicolumn{2}{c}{$r=1$}
      & \multicolumn{2}{c}{$r=2$}
      & \multicolumn{2}{c}{$r=3$}
      & \multicolumn{2}{c}{$r=4$}
      \\
      \cmidrule(lr){2-3}
      \cmidrule(lr){4-5}
      \cmidrule(lr){6-7}
      \cmidrule(lr){8-9}
      $L$ & $|e|_{1}$ & $\rho$ & $|e|_{1}$ & $\rho$ & $|e|_{1}$ & $\rho$ & $|e|_{1}$ & $\rho$
      \\[0.5em]
        0	& 0.10633	& --	& 0.11316	& --	& 0.054669	& --	& 0.036351	& --
        \\
        1	& 0.10436	& 0.02	& 0.060710	& 0.89	& 0.020062	& 1.44	& 0.0051925	& 2.80
        \\
        2	& 0.083847	& 0.31	& 0.029172	& 1.05	& 0.0055015	& 1.86	& 6.5194e-04	& 2.99
        \\
        3	& 0.052386	& 0.67	& 0.0093838	& 1.63	& 8.6523e-04	& 2.66	& 5.1075e-05	& 3.67
        \\
        4	& 0.030721	& 0.76	& 0.0027669	& 1.76	& 1.2138e-04	& 2.83	& 3.4510e-06	& 3.88
        \\
        5	& 0.016719	& 0.87	& 7.5126e-04	& 1.88	& 1.6042e-05	& 2.91	& 2.2429e-07	& 3.94
        \\
        6	& 0.0087313	& 0.93	& 1.9570e-04	& 1.94	& 2.0609e-06	& 2.96	& 1.4286e-08	& 3.97
        \\
      \bottomrule
    \end{tabular}
  }
  
  \subfloat[Convergence in $L^{2}$ norm and convergence rate $\rho$]{
    \begin{tabular} {r r l r l r l r l}
      \toprule
      & \multicolumn{2}{c}{$r=1$}
      & \multicolumn{2}{c}{$r=2$}
      & \multicolumn{2}{c}{$r=3$}
      & \multicolumn{2}{c}{$r=4$}
      \\
      \cmidrule(lr){2-3}
      \cmidrule(lr){4-5}
      \cmidrule(lr){6-7}
      \cmidrule(lr){8-9}
      $L$ & $\|e\|_{2}$ & $\rho$ & $\|e\|_{2}$ & $\rho$ & $\|e\|_{2}$ & $\rho$ & $\|e\|_{2}$ & $\rho$
      \\[0.5em]
        0	& 0.0088039	& --	& 0.0066160	& --	& 0.0034871	& --	& 0.0017926	& --
        \\
        1	& 0.0067425	& 0.38	& 0.0032217	& 1.03	& 6.9095e-04	& 2.33	& 1.5377e-04	& 3.54
        \\
        2	& 0.0044515	& 0.59	& 8.9753e-04	& 1.84	& 1.148e-04	& 2.58	& 1.1642e-05	& 3.72
        \\
        3	& 0.0017281	& 1.36	& 1.3976e-04	& 2.68	& 9.2456e-06	& 3.63	& 4.6968e-07	& 4.63
        \\
        4	& 5.7065e-04	& 1.59	& 2.0850e-05	& 2.74	& 6.2609e-07	& 3.88	& 1.6004e-08	& 4.87
        \\
        5	& 1.6469e-04	& 1.79	& 2.8474e-06	& 2.87	& 4.0472e-08	& 3.95	& 5.2234e-10	& 4.93
        \\
        6	& 4.4342e-05	& 1.89	& 3.7210e-07	& 2.93	& 2.5686e-09	& 3.97	& 1.6668e-11	& 4.96
        \\
      \bottomrule
    \end{tabular}
  }
\end{table}

\subsection*{Acknowledgments}

Helpful discussions with Yuri Bazilevs, Andrea Bonito, and Alan Demlow are acknowledged.

\end{document}